\newcolumntype{C}[1]{>{\centering\arraybackslash}p{#1}}
\newcolumntype{L}[1]{>{\arraybackslash}p{#1}}
\newcommand{\bbA}{\mathbb{A}}
\newcommand{\bbS}{\mathbb{s}}
\newcommand{\gONE}{\Phi}
\newcommand{\gone}{1}
\newcommand{\gBOT}{\Phi}
\newcommand{\gbot}{0}
\newcommand{\gOR}{\,\bigcup\,}
\newcommand{\gor}{\cup}
\newcommand{\gAND}{\odot}
\newcommand{\gand}{\cdot}
\newcommand{\tone}{1_s}
\newcommand{\tbot}{0_s}
\newcommand{\tor}{\sqcup}
\newcommand{\mand}{\ensuremath{\otimes}\xspace}
\newcommand{\WCIRC}{\ensuremath{\circ}\xspace}
\newcommand{\BCIRC}{\ensuremath{\bullet}\xspace}
\newcommand{\tgbox}{\ensuremath{\Box}\xspace}
\newcommand{\gtbox}{\ensuremath{{\scriptstyle\blacksquare}}\xspace}
\newcommand{\gtdia}{\ensuremath{\blacklozenge}\xspace}
\newcolumntype{C}[1]{>{\centering\arraybackslash}p{#1}}
\newcolumntype{L}[1]{>{\arraybackslash}p{#1}}
\DeclareMathSymbol{\Gamma}{\mathalpha}{operators}{0}
\DeclareMathSymbol{\Delta}{\mathalpha}{operators}{1}
\DeclareMathSymbol{\Theta}{\mathalpha}{operators}{2}
\DeclareMathSymbol{\Lambda}{\mathalpha}{operators}{3}
\DeclareMathSymbol{\Xi}{\mathalpha}{operators}{4}
\DeclareMathSymbol{\Pi}{\mathalpha}{operators}{5}
\DeclareMathSymbol{\Sigma}{\mathalpha}{operators}{6}
\DeclareMathSymbol{\Upsilon}{\mathalpha}{operators}{7}
\DeclareMathSymbol{\Phi}{\mathalpha}{operators}{8}
\DeclareMathSymbol{\Psi}{\mathalpha}{operators}{9}
\DeclareMathSymbol{\Omega}{\mathalpha}{operators}{10}
\newtheorem{theorem}{Theorem}
\newtheorem{proposition}{Proposition}
\newtheorem{lemma}{Lemma}
\newtheorem{remark}{Remark}
\newtheorem{definition}{Definition}
\def\mc{\multicolumn}
\newcommand{\fns}{\footnotesize}
\def\fCenter{{\mbox{$\ \vdash\ $}}}
\title{Kleene algebras, adjunction and structural control}
\author[1]{Giuseppe Greco}
\author[2]{Fei Liang}
\author[2,3]{ Alessandra Palmigiano \thanks{This research is supported by the NWO Vidi grant 016.138.314, the NWO Aspasia grant 015.008.054, and a Delft Technology Fellowship awarded to the second author in 2013.}}
\affil[1]{Utrecht University, the Netherlands}
\affil[2]{Delft University of Technology, the Netherlands}
\affil[3]{University of Johannesburg, South Africa}
\date{}
\begin{document}
\maketitle

\begin{abstract}

In the present paper,  we introduce a  multi-type  calculus for the logic of measurable Kleene algebras, for which we prove soundness, completeness, conservativity, cut elimination and subformula property. Our proposal imports ideas and techniques developed in formal linguistics around the notion of structural control \cite{KM}.\\
\\
$\mathbf{Keywords:} $ display calculus, measurable Kleene algebras, structural control.\\
$\mathbf{Math.\ Subject\ Class\ 2010:}$ 03B45, 03G25, 03F05, 08A68.

\end{abstract}

\section{Introduction}
\label{sec:Introduction}
\paragraph{A general pattern.} In this paper, we are going to explore the proof-theoretic ramifications of a pattern which recurs, with different motivations and guises, in various branches of logic,  mathematics, theoretical computer science and formal linguistics. Since the most immediate application we intend to pursue is related to the issue of {\em structural control} in categorial grammar \cite{KM}, we start by presenting this pattern in a way that is amenable to make the connection with structural control.    The pattern we focus on features two {\em types} (of logical languages, of mathematical structures, of data structures, of grammatical behaviour, etc.), a \textsf{General} one and a \textsf{Special} one. Objects of the \textsf{Special} type can be regarded as objects of the \textsf{General} type; moreover,  each \textsf{General} object can be approximated both ``from above'' and ``from below'' by \textsf{Special} objects. That is, there exists a natural notion of order such that the collection of  special objects order-embeds into that of general objects; moreover,  for every general object the smallest special object exists which is greater than or equal to the given general one, and  the greatest special object exists which is smaller than or equal to the given general one.  The situation just described can be captured category-theoretically by stipulating that a given faithful functor  $E: \mathbb{A}\to \mathbb{B}$ between categories $\mathbb{A}$ (of the \textsf{Special} objects) and $\mathbb{B}$ (of the \textsf{General} objects) has both a left adjoint $F: \mathbb{B}\to \mathbb{A}$ and a right adjoint $G: \mathbb{B}\to \mathbb{A}$, and moreover $FE = GE = Id_{\mathbb{A}}$. If we specialize this picture from  categories to posets, the condition above can be reformulated by stating that the order-embedding $e: \mathbb{A}\hookrightarrow \mathbb{B}$ has both a left adjoint $f: \mathbb{B}\twoheadrightarrow \mathbb{A}$ and a right adjoint $g: \mathbb{B}\twoheadrightarrow \mathbb{A}$ such that $fe = ge = id_{\mathbb{A}}$. From these conditions it also follows that the endomorphisms $ef$ and $eg$ on $\mathbb{B}$ are respectively a {\em closure operator} $\gamma: \mathbb{B}\to \mathbb{B}$ (mapping each general object to the smallest special object which is greater than or equal to the given one) and an {\em interior operator } $\iota: \mathbb{B}\to \mathbb{B}$ (mapping each general object to the greatest special object which is smaller than or equal to the given one).

\paragraph{Examples.} A prime example of this situation is the natural embedding map $e$ of the Heyting algebra $\mathbb{A}$ of the up-sets of a poset $\mathbb{W}$, understood as an intuitionistic Kripke structure, into the Boolean algebra $\mathbb{B}$ of the subsets of the domain of the same Kripke structure. This embedding is a complete lattice homomorphism, and hence both its right adjoint and its left adjoint exist. This adjunction situation is the mechanism semantically underlying the celebrated McKinsey-G\"odel-Tarski translation of intuitionistic logic into the classical normal modal logic S4 (cf.~\cite{CPZ:translation} for an extended discussion). Another example arises from the theory of quantales \cite{mulvey1995} (order-theoretic structures arising as ``noncommutative'' generalizations of locales, or pointfree topologies).  For every unital quantale, its two-sided elements\footnote{I.e.~those elements $x$ such that $x\cdot 1\leq x$ and $1\cdot x\leq x$.} form a locale, which is embedded in the quantale, and this embedding has both a left and a right adjoint, so that every element of the quantale is approximated from above and from below by two-sided elements. A third example arises from the algebraic team semantics of inquisitive logic \cite{hodges1997compositional,hodges1997some}, in which the embedding of the algebra interpreting flat formulas into the algebra interpreting general formulas has both a left adjoint and a right adjoint (cf.~\cite{FGPYwollic} for an expanded discussion).

\paragraph{Structural control.} These and other similar adjunction situations provide a promising semantic environment for a line of research in formal linguistics, started in \cite{KM}, and aimed at establishing systematic forms of communication between different grammatical regimes. In \cite{KM}, certain well known extensions of the Lambek calculus are studied as logics for reasoning about the grammatical structure of linguistic resources, in such a way that the requirement of grammatical correctness on the linguistic side is matched by the requirement of derivability on the logical side. In this regard, the various axiomatic extensions of the Lambek calculus correspond to different grammatical regimes which become progressively laxer (i.e.~recognize progressively more constructions as grammatically correct) as their associated logics become progressively stronger. In this context, the basic Lambek calculus incarnates the most general grammatical regime, and the `special' behaviour of its  extensions is captured by additional analytic structural rules. A systematic two-way communication between these grammatical regimes is captured by introducing extra pairs of adjoint modal operators (the {\em structural control} operators), which make it possible to import a degree of flexibility from the special regime into the general regime, and conversely, to endow the special regime with enhanced `structural discrimination' coming from the general regime.
The control operators are normal modal operators inspired by the exponentials of linear logic \cite{Girard} but are not assumed to satisfy the modal S4-type conditions that are satisfied by the linear logic exponentials. Interestingly, in linear logic, precisely the S4-type axioms  guarantee that the `of course'  exponential $\oc$ is an {\em interior operator} and the `why not' exponential $\wn$ is  a {\em closure operator}, and hence each of them can be reobtained as the composition of adjoint pairs of maps between terms of the linear (or general) type and terms of the classical (or special) type, which are section/(co-)retraction pairs. Instead, in \cite{KM},  the adjunction situation is taken as primitive, and 
the structural control adjoint pairs of maps are not section/(co-)retraction pairs. In \cite{GP:linear}, a multi-type environment for linear logic is introduced in which the \textsf{Linear} type encodes the behaviour  of general resources, and the \textsf{Classical/Intuitionistic} type encodes the behaviour of special (renewable) resources. The special behaviour is captured by additional analytic rules (weakening and contraction), and is exported in a controlled form into the general type via the pairs of adjoint connectives which account for the well known controlled application of weakening and contraction in linear logic. This approach has made it possible to design the first calculus for linear logic in which all  rules are closed under uniform substitution (within each type), so that its cut elimination result becomes straightforward. In \cite{GP:linear}   it is also observed that the same underlying mechanisms can be used to account for the controlled application of other structural rules, such as associativity and exchange. Since these are precisely the structural analytic rules capturing the special grammatical regimes in the setting of \cite{KM}, this observation  strengthens the connection between linear logic and the structural control approach of \cite{KM}. 

\paragraph{Kleene algebras: similarities and differences.} In this paper, we focus on the case study of Kleene algebras in close relationship with the ideas of structural control and the multi-type approach illustrated above. Kleene algebras have been introduced to formally capture the behaviour of programs modelled as relations \cite{kozen1994,kozen1997}. While general programs are encoded as arbitrary elements of a Kleene algebra, the Kleene star makes it possible to access the special behaviour of reflexive and transitive programs and to import it in a controlled way within the general  environment. Hence, the role played by the Kleene star is  similar  to the one played by the exponential $\wn$ in linear logic, which  makes it possible to access the special behaviour of renewable resources, captured proof-theoretically by the analytic structural rules of weakening and contraction, and to import it, in a controlled way, into the environment of general resources. 
Another similarity between the Kleene star and $\wn$ is that their axiomatizations guarantee that their algebraic interpretations are closure operators, and hence  can be obtained as the composition of adjoint maps in a way which provides the approximation ``from above'' which is necessary to instantiate the general pattern described above, and use it to justify the soundness of the controlled application of the structural rules capturing the special behaviour. However,  in the general setting of Kleene algebras there is no approximation ``from below'', as e.g.~it is easy to find examples in the context of Kleene algebras of relations in which more than one  reflexive transitive relation can be maximally contained in a given general relation. Our analysis (cf.~Section \ref{sec:Multi-typeLanguage}) identifies the lack of such an approximation ``from below'' as the main hurdle preventing the development of a smooth proof-theoretic treatment of the logic of general Kleene algebras, which to date remains very challenging.

\paragraph{Extant approaches to the logic of Kleene algebras and PDL.} The difficulties in the proof-theoretic treatment of the logic of  Kleene algebras  propagate into the difficulties in the proof-theoretic treatment of Propositional Dynamic Logic (PDL) \cite{Poggiolesi,hartonasanalytic,PDL}. Indeed, PDL can be understood (cf.~\cite{PDL}) as an expansion of the logic of Kleene algebras with a \textsf{Formula} type. Heterogeneous binary operators account for the connection between the  action/program types and the \textsf{Formula} type. The properties of these binary operators are such that their proof-theoretic treatment is per se unproblematic. However, the  PDL axioms encoding the behaviour of the Kleene star are {\em non analytic}, and in the literature several approaches have been proposed to tackle this hurdle, which always involve some trade-off: from sequent calculi with finitary rules but with a non-eliminable analytic cut \cite{hartonasanalytic,jipsen}, to cut-free sequent calculi with infinitary rules \cite{Poggiolesi,palka2007infinitary}.

\paragraph{Measurable Kleene algebras.} In this paper, we introduce a subclass of Kleene algebras, referred to as {\em measurable} Kleene algebras,\footnote{The name is chosen by analogy with measurable sets in analysis, which are defined in terms of the existence of approximations ``from above'' and ``from below''.} which are Kleene algebras endowed with a {\em dual} Kleene star operation, associating any element with its reflexive transitive {\em interior}. Similar definitions have been introduced in the context of dioids (cf.~e.g.~\cite{hardouin2013discrete} and \cite{laurence2014completeness}; in the latter, however, the order-theoretic behaviour of the dual Kleene star is that of a second closure operator rather than that of an interior operator). In measurable Kleene algebras, the defining properties  of the dual Kleene star are those of an {\em interior} operator, which then provides the approximation ``from below'' which is missing in the setting of general Kleene algebras. Hence measurable Kleene algebras are designed to provide yet another instance of the pattern described in the beginning of the present introduction. In this paper, this pattern is used as a semantic support of a proper display calculus for the logic of measurable Kleene algebras, and for establishing a conceptual and technical connection between Kleene algebras and structural control which is potentially beneficial for both areas.

\paragraph{Structure of the paper.} In Section \ref{sec:Preliminaries}, we collect preliminaries on (continuous) Kleene algebras and their logics,  introduce the notion of measurable Kleene algebra, and propose an axiomatization for the logic corresponding to this class. In Section \ref{sec:SemanticEnvironment}, we introduce the heterogeneous algebras corresponding to (continuous, measurable) Kleene algebras and prove that each class of Kleene algebras can be equivalently presented in terms of its heterogeneous counterpart. In Section \ref{sec:Multi-typeLanguage}, we introduce  multi-type languages corresponding to the semantic environments of heterogeneous Kleene algebras, define a translation from the single -type languages to the multi-type languages, and analyze the proof-theoretic hurdles posed by Kleene logic with the lenses of the multi-type environment. This analysis leads to our proposal, introduced in Section \ref{sec:calculus}, of a proper display calculus for the logic of measurable Kleene algebras. In Section \ref{sec:properties} we verify that this calculus is sound, complete, conservative and has cut elimination and subformula property.

\section{Kleene algebras and their logics}
\label{sec:Preliminaries}

\subsection{Kleene algebras and continuous Kleene algebras}
\begin{definition}
\label{def:Ka}
A {\em Kleene algebra} \cite{kozen1990kleene} is a structure $\mathbb{K} = (K, \gor, \cdot, ()^\ast, \gone, \gbot)$ such that:
\begin{enumerate}
\item[K1] $(K, \gor, \gbot)$ is a join-semilattice with bottom element $\gbot$;
\item[K2] $(K, \cdot, \gone)$ is a monoid with  unit $\gone$, moreover $\cdot$ preserves $\gor$ in each coordinate, and $\gbot$ is an annihilator for $\cdot$;
\item[K3] $\gone \gor \alpha \cdot \alpha^\ast \leq  \alpha^\ast$,  $\gone \gor \alpha^\ast \cdot \alpha \leq  \alpha^\ast$, and $\gone \gor \alpha^\ast \cdot \alpha^\ast \leq \alpha^\ast$;
\item[K4] $\alpha \cdot \beta \leq \beta$ implies $\alpha^\ast \cdot \beta\leq \beta$;
\item[K5] $\beta \cdot \alpha \leq \beta$ implies $\beta \cdot \alpha^\ast \leq \beta$.
\end{enumerate}

A Kleene algebra is {\em  continuous} \cite{kozen1990kleene} if:\footnote{For any $n \in \mathbb{N}$ let $\alpha^n$ be defined by induction as follows: $\alpha^0: = \gone$ and $\alpha^{n+1}: = \alpha^n\cdot \alpha$.}
\begin{enumerate}
\item[K1'] $(K, \gor, \gbot)$ is a {\em complete} join-semilattice;
\item[K2'] $\cdot$ is {\em completely} join-preserving in each coordinate;
\item[K6] $\alpha^* = \gOR \alpha^n$ for $n \geq 0$.
\end{enumerate}
\end{definition}

\begin{lemma}\cite[Section 2.1]{kozen1994}
\label{lem:closure}
For any Kleene algebra $\mathbb{K}$ and any $\alpha,\beta \in K$,
\begin{enumerate}
\item $ \alpha \leq \alpha^\ast$;
\item $\alpha^\ast = \alpha^{\ast\ast}$;
\item if $\alpha \leq \beta$,  then $\alpha^\ast \leq \beta^\ast$.
\end{enumerate}
\end{lemma}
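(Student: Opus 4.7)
The plan is to establish the three items in the order (1), (3), (2), using only the defining axioms K1--K5 together with the monotonicity of $\cdot$ in each coordinate, which is immediate from K2 since join-preservation implies monotonicity.

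For item (1), the starting observation is that axiom K3 already contains both ingredients: from $1 \cup \alpha \cdot \alpha^\ast \leq \alpha^\ast$ one extracts $1 \leq \alpha^\ast$ and $\alpha\cdot\alpha^\ast \leq \alpha^\ast$. Combining $1\leq\alpha^\ast$ with monotonicity of $\cdot$ on the left gives $\alpha = \alpha\cdot 1 \leq \alpha \cdot \alpha^\ast$, and chaining with the second inequality yields $\alpha\leq\alpha^\ast$.

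For item (3), suppose $\alpha\leq\beta$. I would apply the induction rule K4 with the ``witness'' $\beta^\ast$. Monotonicity of $\cdot$ and the hypothesis give $\alpha\cdot\beta^\ast \leq \beta\cdot\beta^\ast$, and K3 gives $\beta\cdot\beta^\ast \leq \beta^\ast$; hence $\alpha\cdot\beta^\ast \leq \beta^\ast$. K4 then upgrades this to $\alpha^\ast\cdot\beta^\ast \leq \beta^\ast$. Finally, using $1\leq\beta^\ast$ (already observed in the proof of (1)) and monotonicity, we obtain $\alpha^\ast = \alpha^\ast\cdot 1 \leq \alpha^\ast\cdot\beta^\ast \leq \beta^\ast$.

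For item (2), the inequality $\alpha^\ast\leq\alpha^{\ast\ast}$ is just item (1) instantiated at $\alpha^\ast$. For the reverse, the key move is to apply K4 with both of its parameters taken to be $\alpha^\ast$: the premise $\alpha^\ast\cdot\alpha^\ast\leq\alpha^\ast$ is supplied by K3, so K4 delivers $\alpha^{\ast\ast}\cdot\alpha^\ast\leq\alpha^\ast$, and the now-familiar trick $\alpha^{\ast\ast} = \alpha^{\ast\ast}\cdot 1 \leq \alpha^{\ast\ast}\cdot\alpha^\ast$ closes the argument. None of the three items presents a real obstacle; the recurring pattern---use K3 to produce a closure-like inequality, feed it to K4, then multiply by $1$ at the end---is essentially the only tool available and is entirely sufficient. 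If anything, the step that requires the most care is the choice of $\beta^\ast$ (rather than $\beta$) as the ``witness'' in the application of K4 for item (3), since it is this choice that lets the monotonicity of $(-)^\ast$ be propagated through the induction rule.
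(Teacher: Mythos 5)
Your proposal is correct: each step uses only K2--K4 as stated, and the pattern (extract $1\leq\beta^\ast$ and $\beta\cdot\beta^\ast\leq\beta^\ast$ from K3, feed the latter to the induction rule K4, then multiply by $1$) is exactly the standard argument. The paper itself gives no proof, deferring to Kozen's treatment, and your derivation is essentially that same standard one, so there is nothing to add.
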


By Lemma \ref{lem:closure}, the operation $\ast : K \to K$ is a closure operator on $K$ seen as a poset.
\begin{lemma}
\label{lemma: sufficient for identity ast}
For any continuous Kleene algebra $\mathbb{K}$ and any $\alpha, \beta\in K$,
\[\mbox{If } \alpha\leq \beta \mbox{ and } \gone\leq \beta \mbox{ and } \beta\cdot\beta\leq \beta \mbox{ then } \alpha^\ast \leq \beta.\]
\end{lemma}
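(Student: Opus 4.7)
The plan is to use the continuity axiom K6 to reduce the statement about $\alpha^\ast$ to a statement about the powers $\alpha^n$, and then prove $\alpha^n\leq \beta$ for all $n\in\mathbb{N}$ by a routine induction on $n$.

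First I would record the preliminary observation that $\cdot$ is monotone in each coordinate. This is immediate from K2: since $\cdot$ preserves $\gor$ in each coordinate, and $\gamma\leq\delta$ means $\gamma\gor\delta=\delta$, we get $\alpha\cdot\delta = \alpha\cdot(\gamma\gor\delta) = (\alpha\cdot\gamma)\gor(\alpha\cdot\delta)$, hence $\alpha\cdot\gamma\leq \alpha\cdot\delta$; and symmetrically for the left coordinate. Thus from $x\leq x'$ and $y\leq y'$ one concludes $x\cdot y\leq x'\cdot y'$.

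Next I would prove by induction on $n$ that $\alpha^n\leq\beta$ for every $n\geq 0$. The base case $n=0$ reads $\gone\leq\beta$, which is one of the hypotheses. For the inductive step, assume $\alpha^n\leq\beta$. By the monotonicity observation and the hypothesis $\alpha\leq\beta$, we have
\[
\alpha^{n+1} \;=\; \alpha^n\cdot\alpha \;\leq\; \beta\cdot\beta,
\]
and then the hypothesis $\beta\cdot\beta\leq\beta$ yields $\alpha^{n+1}\leq\beta$, completing the induction.

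Finally I would invoke continuity: by K6, $\alpha^\ast = \bigcup_{n\geq 0}\alpha^n$, and since $\beta$ is an upper bound of the family $\{\alpha^n\mid n\geq 0\}$ by the induction just carried out, the definition of join gives $\alpha^\ast\leq\beta$, as required. There is no real obstacle here; the only subtlety to be careful about is the appeal to two-sided monotonicity of $\cdot$, which is not stated explicitly in Definition \ref{def:Ka} but, as noted above, is a direct consequence of the join-preservation clause in K2.
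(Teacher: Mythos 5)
Your proof is correct. The paper in fact states Lemma \ref{lemma: sufficient for identity ast} without giving a proof, and your argument is the natural intended one: monotonicity of $\cdot$ (derived from join-preservation in K2), an induction showing $\alpha^n\leq\beta$ for all $n\geq 0$, and then K6 together with the least-upper-bound property of the join from K1'. One remark worth making: continuity is not actually needed. From $\alpha\leq\beta$ and $\beta\cdot\beta\leq\beta$ one gets $\alpha\cdot\beta\leq\beta$, so K4 gives $\alpha^\ast\cdot\beta\leq\beta$, and then $\alpha^\ast=\alpha^\ast\cdot\gone\leq\alpha^\ast\cdot\beta\leq\beta$ using $\gone\leq\beta$ and monotonicity; so the lemma holds in every Kleene algebra, and your K6-based route simply trades the induction axiom K4 for the explicit sup formula, which is perfectly adequate for the continuous case treated here.
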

Next, we  introduce a subclass of Kleene algebras endowed with both a Kleene star and a dual Kleene star. To our knowledge, this definition has not appeared as such in the literature, although similar definitions have been proposed in different settings (cf.~\cite{laurence2014completeness,BHR}).

\begin{definition}
\label{def:MKa}
A {\em  measurable Kleene algebra} is a structure $\mathbb{K} = (K, \gor, \cdot, ()^\ast, ()^{\star}, \gone, \gbot)$ such that:
\begin{enumerate}
\item[MK1] $(K, \gor, \cdot, ()^\ast,  \gone, \gbot)$ is a continuous Kleene algebra;
\item[MK2] $()^\star$ is a monotone unary operation;

\item[MK3] $\gone\leq \alpha^\star$, and $\alpha^\star\cdot\alpha^\star\leq \alpha^\star$;
 \item[MK4] $\alpha^\star\leq \alpha$ and $\alpha^\star\leq \alpha^{\star\star}$;
 \item[MK5] $\beta\leq \alpha$ and $\gone \leq \beta$ and $\beta\cdot \beta\leq \beta$ implies $\beta\leq \alpha^\star$.
\end{enumerate}
\end{definition}

\begin{lemma}
\label{lemma: ast and star surjective on special}
For any measurable Kleene algebra $\mathbb{K}$ and any $\alpha\in K$, if $\gone\leq \alpha$ and $\alpha\cdot\alpha\leq \alpha$, then
\[\alpha^\ast  = \alpha = \alpha^\star.\]
Hence, \[\mathsf{Range}(\ast ) = \mathsf{Range}(\star ) = \{\beta\in K\mid \gone\leq\beta \mbox{ and } \beta\cdot\beta\leq \beta\}.\]
\end{lemma}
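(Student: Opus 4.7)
The plan is to split the claim into two equalities, $\alpha^\ast=\alpha$ and $\alpha=\alpha^\star$, each proved by two-sided inequality, and then derive the characterization of the ranges as a direct corollary.

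For $\alpha^\ast=\alpha$: the inequality $\alpha\leq\alpha^\ast$ is simply Lemma~\ref{lem:closure}(1). For the converse $\alpha^\ast\leq\alpha$, I would invoke Lemma~\ref{lemma: sufficient for identity ast} with the choice $\beta:=\alpha$; the three hypotheses $\alpha\leq\alpha$, $\gone\leq\alpha$ and $\alpha\cdot\alpha\leq\alpha$ are, respectively, reflexivity of the order and the two assumptions of the present lemma, so the lemma delivers $\alpha^\ast\leq\alpha$ at once.

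For $\alpha=\alpha^\star$: the inequality $\alpha^\star\leq\alpha$ is one half of axiom MK4. For the converse $\alpha\leq\alpha^\star$, I would apply axiom MK5 with $\beta:=\alpha$; the three premises of MK5, namely $\beta\leq\alpha$, $\gone\leq\beta$ and $\beta\cdot\beta\leq\beta$, become $\alpha\leq\alpha$, $\gone\leq\alpha$ and $\alpha\cdot\alpha\leq\alpha$, all of which hold by hypothesis, yielding $\alpha\leq\alpha^\star$. Hence $\alpha^\ast=\alpha=\alpha^\star$.

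For the range identities, I would argue by double inclusion. First, for any $\alpha\in K$, axiom K3 gives $\gone\leq\alpha^\ast$ and $\alpha^\ast\cdot\alpha^\ast\leq\alpha^\ast$, so $\mathsf{Range}(\ast)\subseteq\{\beta\in K\mid \gone\leq\beta\text{ and }\beta\cdot\beta\leq\beta\}$; analogously, axiom MK3 gives the corresponding inclusion for $\mathsf{Range}(\star)$. Conversely, if $\beta\in K$ satisfies $\gone\leq\beta$ and $\beta\cdot\beta\leq\beta$, the first part of the lemma (applied to $\beta$ in place of $\alpha$) yields $\beta=\beta^\ast\in\mathsf{Range}(\ast)$ and $\beta=\beta^\star\in\mathsf{Range}(\star)$, closing both inclusions.

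I do not anticipate a real obstacle: the statement is essentially an \emph{instantiation} lemma, consisting in unpacking the definitions and recognizing that the hypotheses of Lemma~\ref{lemma: sufficient for identity ast} and of axiom MK5 are precisely what is being assumed. The only point meriting care is making sure to use the \emph{correct halves} of the two-sided axioms (MK4 for $\alpha^\star\leq\alpha$, MK5 for $\alpha\leq\alpha^\star$), and to observe that the range computation needs no new work once the first part is proved, since any idempotent supra-unital element is its own $\ast$ and its own $\star$.
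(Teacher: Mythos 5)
Your proof is correct and follows essentially the same route as the paper: $\alpha\leq\alpha^\ast$ from Lemma~\ref{lem:closure}, $\alpha^\ast\leq\alpha$ from Lemma~\ref{lemma: sufficient for identity ast} with $\beta:=\alpha$, the two halves of the $\star$-equality from MK4 and MK5, and the range identities from K3/MK3 together with the first part. No gaps; the only implicit point (that Lemma~\ref{lemma: sufficient for identity ast} applies because MK1 makes the algebra continuous) is equally implicit in the paper's own argument.
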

\begin{proof}
By MK4  $\alpha^\star\leq \alpha$; the converse direction follows by MK5 with $\beta: = \alpha$. By Lemma \ref{lem:closure}, $ \alpha\leq  \alpha^\ast$; the converse direction follows from Lemma \ref{lemma: sufficient for identity ast}. This completes the proof of the first part of the statement, and of the inclusion of the set of the $\beta$s with the special behaviour into $\mathsf{Range}(\ast )$ and $\mathsf{Range}(\star )$. The converse inclusions immediately follow from K3 and MK3.
\end{proof}

\subsection{The logics of Kleene algebras}
\label{ssec:kleene logics}
Fix a denumerable set $\mathsf{Atprop}$ of propositional variables, the elements of which are denoted $a, b$ possibly with sub- or superscripts. The language  $\mathsf{KL}$  over  $\mathsf{Atprop}$ is defined recursively as follows:
$$\alpha ::= a \mid \gone \mid \gbot \mid \alpha \gor \alpha \mid \alpha \gand \alpha \mid \alpha^\ast$$
In what follows, we use $\alpha, \beta, \gamma$ (with or without subscripts) to denote formulas in $\mathsf{KL}$.

\begin{definition}
\label{def:KL}
Kleene logic, denoted $\mathrm{S.KL}$, is presented in terms of the following axioms
$$\gbot \fCenter \alpha, \quad \alpha \fCenter \alpha, \quad  \alpha \fCenter \alpha \vee \beta, \quad \beta \fCenter \alpha \vee \beta, \quad \gbot\gand\alpha \dashv\vdash \alpha\gand\gbot, \quad  \gbot\gand\alpha \dashv\vdash \gbot,$$
$$\quad  \gone\gand\alpha \dashv\vdash \alpha\gand\gone, \quad  \gone\gand\alpha \dashv\vdash \alpha, \quad \alpha \gand (\beta \gor \gamma)  \dashv\vdash (\alpha \gand \beta) \gor (\alpha \gand \gamma), \quad (\beta \gor \gamma) \gand  \alpha \dashv\vdash (\beta \gand \alpha) \gor (\gamma \gand \alpha)$$
$$(\alpha \gand \beta) \gand \gamma \dashv\vdash \alpha \gand (\beta \gand \gamma), \quad \gone \gor \alpha \cdot \alpha^\ast \vdash  \alpha^\ast, \quad \gone \gor  \alpha^\ast \cdot \alpha \vdash  \alpha^\ast, \quad \gone \gor \alpha^\ast \cdot \alpha^\ast \vdash \alpha^\ast$$
and the following rules:
\\

\begin{tabular}{lllll}
\AX $\alpha \fCenter \beta$
\AX $\beta \fCenter \gamma$
\BI $\alpha \fCenter \gamma$
\DP
&
\AX $\alpha \fCenter \gamma$
\AX $\beta \fCenter \gamma$
\BI$\alpha \vee \beta \fCenter \gamma$
\DP
&
\AX $\alpha_1 \fCenter \beta_1$
\AX $\alpha_2\fCenter \beta_2$
\BI$\alpha_1 \gand \alpha_2  \fCenter \beta_1\gand \beta_2$
\DP
\end{tabular}
\\
\begin{center}
\begin{tabular}{ll}
\AX$\alpha \gand \beta \fCenter \beta$
\LL{\scriptsize{K4}}
\UI$\alpha^\ast \gand \beta \fCenter \beta$
\DP
&
\AX$ \beta \gand \alpha \fCenter \beta$
\RL{\scriptsize{K5}}
\UI$\beta \gand \alpha^\ast \fCenter \beta$
\DP
\\
\\
\end{tabular}
\end{center}
Continuous Kleene logic, denoted $\mathrm{S.KL_{\omega}}$, is the axiomatic extension of $\mathrm{S.KL}$ determined by the following axioms:
$$\alpha \gand (\gOR_{i \in \omega}\,\, \beta_i)  \dashv\vdash \gOR_{i \in \omega}\,\, (\alpha \gand \beta_i), \quad \gOR_{i \in \omega}\,\, \beta_i \gand \alpha \dashv\vdash \gOR_{i \in \omega}\,\, (\beta_i \gand \alpha),$$
$$ \gOR_{n\geq 0} \,\, \beta\gand\alpha^n\gand\gamma \dashv\vdash \beta\gand\alpha^\ast\gand\gamma$$
\end{definition}

\begin{theorem}\cite{kozen1994}
$(\mathrm{S.KL}_\omega)$ $\mathrm{S.KL}$ is complete with respect to (continuous) Kleene algebras.
\end{theorem}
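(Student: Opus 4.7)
\smallskip

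The plan is to proceed via a standard Lindenbaum-Tarski argument for soundness and completeness, treating the continuous case as a refinement of the general one. For \textbf{soundness}, I would work by induction on the length of derivations in $\mathrm{S.KL}$ ($\mathrm{S.KL}_\omega$): every axiom-schema corresponds verbatim to a defining (in)equality of (continuous) Kleene algebras (K1--K5 and, in the continuous case, K1'--K2' together with K6), and every rule reflects either transitivity of the order, the join-preservation of $\gor$, the bi-coordinate monotonicity of $\gand$, or the two Horn-style implications K4 and K5. Interpreting a sequent $\alpha\vdash\beta$ as the inequality $v(\alpha)\leq v(\beta)$ under an arbitrary valuation $v:\mathsf{Atprop}\to K$, each step preserves validity, so $\alpha\vdash\beta$ in $\mathrm{S.KL}$ implies $v(\alpha)\leq v(\beta)$ in every Kleene algebra.

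For \textbf{completeness of $\mathrm{S.KL}$}, I would build the Lindenbaum-Tarski algebra in the usual way: define $\alpha\equiv\beta$ iff $\alpha\dashv\vdash\beta$, which is a congruence with respect to $\gor$, $\gand$, and $()^{\ast}$ (for $()^{\ast}$ this uses monotonicity, established through the K4/K5 rules together with Lemma \ref{lem:closure}); the quotient $\mathbb{F}:=\mathsf{KL}/{\equiv}$ inherits the pre-order $[\alpha]\leq[\beta]\iff\alpha\vdash\beta$. Verification that $\mathbb{F}$ satisfies K1--K5 is a direct one-to-one match with the axioms and rules of $\mathrm{S.KL}$: the join-semilattice axioms and the monoid axioms with bottom-annihilation and distributivity come from the stock $\dashv\vdash$-axioms, K3 from the three $\gone\gor(\cdot)\vdash\alpha^{\ast}$ axioms, and K4, K5 from the two named rules. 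Taking the canonical valuation $v(a):=[a]$ then yields $v(\alpha)\leq v(\beta)\Longrightarrow\alpha\vdash\beta$, which is completeness with respect to the class of all Kleene algebras.

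For \textbf{completeness of $\mathrm{S.KL}_\omega$}, the additional continuous-distributivity and $\star$-unfolding axioms are infinitary and must be read schematically over all families $\{\beta_i\}_{i\in\omega}$ and $\{\alpha^n\}_{n\geq 0}$ of already-formed formulas. The Lindenbaum-Tarski quotient $\mathbb{F}_\omega$ then validates these schemata as joins of the corresponding definable countable families; however $\mathbb{F}_\omega$ need not be closed under \emph{arbitrary} joins, so strictly speaking it is not yet a continuous Kleene algebra in the sense of Definition \ref{def:Ka}. The cleanest way around this, and the step I expect to be the \textbf{main obstacle}, is to embed $\mathbb{F}_\omega$ order- and operation-faithfully into a genuinely complete Kleene algebra: either via a Dedekind-MacNeille-style completion that preserves existing countable joins and the monoid structure, or, following Kozen's original route, by exploiting the fact that the free continuous Kleene algebra on $\mathsf{Atprop}$ is the algebra of regular languages over $\mathsf{Atprop}$, so one reduces completeness to the equational theory of regular languages. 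In either formulation one has to show that the ambient structure actually realises $\alpha^{\ast}$ as $\gOR_{n\geq 0}\alpha^n$ and that $\gand$ is completely join-preserving; once this is in place, the canonical valuation gives the desired converse, finishing the proof.
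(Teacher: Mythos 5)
There is no internal proof to compare against here: the paper simply cites Kozen \cite{kozen1994} for this theorem, so your attempt has to be judged on its own merits. The first half of your argument is fine and essentially routine: soundness by induction on derivations, and completeness of $\mathrm{S.KL}$ with respect to all Kleene algebras via the Lindenbaum--Tarski quotient, work precisely because $\mathrm{S.KL}$ is a verbatim quasi-equational presentation of conditions K1--K5 of Definition \ref{def:Ka} (one small repair: congruence of $\equiv$ with respect to $()^\ast$ should be justified by \emph{deriving} monotonicity of $()^\ast$ inside $\mathrm{S.KL}$ from the K3 axioms, the K4 rule and the monotonicity rule for $\gand$, not by invoking the algebraic Lemma \ref{lem:closure}, which is a statement about Kleene algebras, i.e.\ about the very quotient you are still constructing).

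The continuous half, however, contains a genuine gap, which you yourself flag but do not close. The Lindenbaum algebra of $\mathrm{S.KL_\omega}$ is at best $\ast$-continuous (each $\beta\gand\alpha^\ast\gand\gamma$ is the least upper bound of the $\beta\gand\alpha^n\gand\gamma$); to get completeness with respect to continuous Kleene algebras in the sense of Definition \ref{def:Ka} you must embed it into a structure with \emph{arbitrary} joins over which $\gand$ distributes, preserving $\gor$, $\gand$, $\gone$, $\gbot$ and the designated countable joins realizing $\alpha^\ast$. Neither of your two proposed routes does this as stated. The claim that ``the free continuous Kleene algebra on $\mathsf{Atprop}$ is the algebra of regular languages'' is false in this paper's sense: regular languages are not closed under arbitrary unions, so they do not even form a continuous Kleene algebra; Kozen's regular-events theorem is the (hard) completeness result for the \emph{finitary} axioms and does not by itself manufacture a continuous countermodel for an underivable sequent of $\mathrm{S.KL_\omega}$. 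The Dedekind--MacNeille suggestion is the right kind of idea, but it is not automatic: the MacNeille completion preserves existing joins, yet one must additionally prove that $\gand$ becomes completely join-preserving in the completion, which ordinarily requires residuals (absent from this signature) or a bespoke nucleus-style closure on downsets compatible with the $\omega$-axioms. Until such an embedding (or an equivalent argument) is actually carried out, the completeness claim for $\mathrm{S.KL_\omega}$ is not established. A further point you should make explicit first: the $\omega$-axioms involve countable disjunctions that are not formulas of the official finitary language $\mathsf{KL}$, so they must be recast (e.g.\ as an $\omega$-rule of the form ``from $\beta\gand\alpha^n\gand\gamma\vdash\delta$ for all $n$ infer $\beta\gand\alpha^\ast\gand\gamma\vdash\delta$'') before the Lindenbaum construction for $\mathrm{S.KL_\omega}$ even makes sense.
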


The language  $\mathsf{MKL}$  over  $\mathsf{Atprop}$ is defined recursively as follows:
$$\alpha ::= a \mid \gone \mid \gbot \mid \alpha \gor \alpha \mid \alpha \gand \alpha \mid \alpha^\ast \mid \alpha^\star.$$

\begin{definition}
\label{def:DualStar}
Measurable Kleene logic, denoted $\mathrm{S.MKL}$,  is presented in terms of the axioms and rules of $\mathrm{S.KL}$ plus the following axioms:
$$\gone \vdash \alpha^\star\quad \alpha^\star\cdot\alpha^\star\vdash \alpha^\star\quad  \alpha^\star\vdash \alpha\quad \alpha^\star\vdash (\alpha^\star)^\star$$
and the following rules:
\begin{center}
\begin{tabular}{ll}
\AX $\alpha \fCenter \beta$
\UI $\alpha^\star \fCenter \beta^\star$
\DP
&
\AX $\beta \fCenter \alpha$
\AX $\gone \fCenter \beta$
\AX $\beta\cdot \beta \fCenter \beta$
\TI $\beta \fCenter \alpha^\star$
\DP
\\
\end{tabular}
\end{center}
\end{definition}


\section{Multi-type semantic environment for Kleene algebras}
\label{sec:SemanticEnvironment}
In the present section, we introduce the algebraic environment which justifies semantically the multi-type approach to the logic of measurable Kleene algebras which we develop in Section \ref{ssec:kleene logics}. In the next subsection, we take Kleene algebras  as starting point, and expand on the properties of the image of the algebraic interpretation of the Kleene star, leading to the notion of `kernel'.  In the remaining  subsections, we show that (continuous, measurable) Kleene algebras  can be equivalently presented in terms of their corresponding heterogeneous algebras.

\subsection{Kleene algebras and their kernels}\label{ssec:KleeneAlgebras}

By Lemma \ref{lem:closure}, for any Kleene algebra $\mathbb{K}$, the operation $()^\ast : K \to K$ is a closure operator on $K$ seen as a poset. By general order-theoretic facts (cf.~\cite[Chapter 7]{DaveyPriestley2002}) this means that  \[()^\ast  =  e\gamma,\] where $\gamma: K\twoheadrightarrow \mathsf{Range}(\ast )$, defined by $\gamma(\alpha) =  \alpha^\ast$ for every $a\in K$, is the left adjoint of the natural embedding  $e: \mathsf{Range}(\ast )\hookrightarrow K$,  i.e.\ for every $\alpha \in K$,  and $\xi \in \mathsf{Range}(\ast)$,
\[\gamma(\alpha)\leq \xi\quad \mbox{ iff }\quad \alpha \leq e(\xi). \]

In what follows, we let  $S$ be the subposet of $K$ identified by $\mathsf{Range}(\ast ) = \mathsf{Range}(\gamma)$. We will also use the variables $\alpha, \beta$, possibly with sub- or superscripts, to denote elements of $K$, and $\pi, \xi, \chi$, possibly with sub- or superscripts, to denote elements of $S$.

\begin{lemma}
For every Kleene algebra $\mathbb{K}$ and every $\xi \in S$,
\begin{equation}\label{lemma:Retraction}
 \gamma(e(\xi)) = \xi.
\end{equation}
\end{lemma}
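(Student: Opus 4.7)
The identity to be established is essentially the familiar fact that, in an adjunction $\gamma \dashv e$ with $e$ an order-embedding, the left adjoint $\gamma$ acts as an inverse of $e$ on the range of $\gamma$. Equivalently, it is a direct reformulation of the idempotence of the closure operator $()^\ast$ already recorded in Lemma \ref{lem:closure}.2.

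First I would unfold the hypothesis: $\xi \in S = \mathsf{Range}(\ast)$ means that there exists some $\alpha \in K$ with $\xi = \alpha^\ast$. Since $e : S \hookrightarrow K$ is the natural inclusion, $e(\xi)$ is simply $\alpha^\ast$ regarded as an element of $K$. Applying the definition $\gamma(\beta) = \beta^\ast$ then gives
\[
\gamma(e(\xi)) \;=\; (e(\xi))^\ast \;=\; (\alpha^\ast)^\ast \;=\; \alpha^{\ast\ast}.
\]
Invoking Lemma \ref{lem:closure}.2, which states $\alpha^{\ast\ast} = \alpha^\ast$, we conclude $\gamma(e(\xi)) = \alpha^\ast = \xi$, as required.

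There is no real obstacle to overcome here: the only substantive ingredient is the idempotence $\alpha^{\ast\ast} = \alpha^\ast$ from Lemma \ref{lem:closure}, and the rest is unwinding of definitions. Conceptually, the statement is an instance of the general principle that when $\gamma \dashv e$ and the endomap $e\gamma$ is a closure operator on the ambient poset, the restriction $\gamma \circ e$ is forced to be the identity on $\mathsf{Range}(\gamma)$; here this principle is witnessed concretely by the already-known idempotence of the Kleene star.
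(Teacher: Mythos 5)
Your proof is correct, but it takes a different route from the paper's. You work with the concrete presentation: $\xi=\alpha^\ast$ for some $\alpha\in K$, $e$ is the set-theoretic inclusion, $\gamma(\beta)=\beta^\ast$, so $\gamma(e(\xi))=\alpha^{\ast\ast}=\alpha^\ast=\xi$ by the idempotence in Lemma \ref{lem:closure}. The paper instead never re-enters the Kleene star: it argues purely from the adjunction $\gamma\dashv e$ together with the fact that $e$ is an order-embedding, proving the two inequalities separately ($\gamma(e(\xi))\leq\xi$ is, by adjunction, equivalent to $e(\xi)\leq e(\xi)$; and $\xi\leq\gamma(e(\xi))$ reduces, via the embedding, to $e(\xi)\leq e(\gamma(e(\xi)))$, which by adjunction is $\gamma(e(\xi))\leq\gamma(e(\xi))$). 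Your computation is shorter and arguably more transparent, since it exhibits the identity as a one-line consequence of $\alpha^{\ast\ast}=\alpha^\ast$; its cost is that it is tied to the particular definition of $\gamma$ and $S$ as $\mathsf{Range}(\ast)$. The paper's argument buys generality: it is a statement about any adjoint pair in which the right adjoint is an order-embedding, and exactly the same two-line template is reused later (mirror-imaged) to prove $\xi=\iota(e(\xi))$ for the interior operator in Lemma \ref{lemma:Retraction and coretraction}, where no analogue of "pick $\alpha$ with $\xi=\alpha^\ast$ and compute" needs to be redone from scratch. Both proofs ultimately rest on Lemma \ref{lem:closure} (yours via item 2 directly, the paper's via the decomposition $()^\ast=e\gamma$ it licenses), so the difference is one of level of abstraction rather than of substance.
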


\begin{proof}
By adjunction, $ \gamma(e(\xi)) \leq \xi$ iff  $e(\xi)\leq e(\xi)$, which always holds. As to the converse inequality $\xi \leq \gamma(e(\xi))$, since $e$ is an order-embedding, it is enough to show that $ e(\xi) \leq e(\gamma(e(\xi)))$, which by adjunction is equivalent to $\gamma(e(\xi))\leq \gamma(e(\xi))$, which always holds.
\end{proof}

\begin{definition}
\label{def:Kernel}
For any  Kleene algebra $\mathbb{K} = (K, \gor, \cdot, ()^\ast, \gone, \gbot)$, let the {\em kernel} of $\mathbb{K}$ be the structure $\mathbb{S} = (S, \tor, \tbot)$  defined as follows:
\begin{itemize}
\item[KK1.] $S: = \mathsf{Range}(\ast ) = \mathsf{Range}(\gamma)$, where $\gamma: K\twoheadrightarrow S$ is defined by letting $\gamma(\alpha) = \alpha^*$ for any $\alpha \in K$;
\item[KK2.] $\xi \tor \chi: = \gamma(e(\xi) \gor e(\chi))$;
\item[KK3.]  $\tbot: = \gamma(\gbot)$.
\end{itemize}
\end{definition}

\begin{proposition}\label{prop:AlgebraicStructureOnKernels}
If $\mathbb{K}$ is a (continuous) Kleene algebra, then its kernel $\mathbb{S}$ defined as above is a (complete) join-semilattice with bottom element.
\end{proposition}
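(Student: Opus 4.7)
The plan is to exploit the adjunction $\gamma \dashv e$ together with the retraction identity $\gamma\circ e = id_S$ (equation \ref{lemma:Retraction}) to transfer the (complete) join-semilattice structure from $\mathbb{K}$ to $\mathbb{S}$. The order on $S$ to work with is the restriction of the order of $K$ under the embedding $e$, i.e.\ $\xi \leq_S \chi$ iff $e(\xi)\leq e(\chi)$; since $e$ is an order-embedding, this is well defined and makes $e$ an order-isomorphism between $S$ and its image in $K$.

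First I would show that $\tbot$ is indeed a bottom element of $S$. For any $\xi\in S$, since $\gbot \leq e(\xi)$ in $K$ and $\gamma$ is monotone (being a left adjoint), $\tbot = \gamma(\gbot)\leq \gamma(e(\xi)) = \xi$ by \ref{lemma:Retraction}.

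Next I would show that $\xi\tor\chi = \gamma(e(\xi)\gor e(\chi))$ is the join of $\xi$ and $\chi$ in $S$. For the upper bound condition: the unit of the adjunction gives $\alpha\leq e(\gamma(\alpha))$ for every $\alpha\in K$, hence $e(\xi)\leq e(\xi)\gor e(\chi) \leq e(\gamma(e(\xi)\gor e(\chi))) = e(\xi\tor\chi)$, which by the order-embedding property of $e$ yields $\xi\leq_S \xi\tor\chi$; symmetrically $\chi\leq_S \xi\tor\chi$. For minimality: if $\xi\leq_S\pi$ and $\chi\leq_S \pi$ with $\pi\in S$, then $e(\xi)\leq e(\pi)$ and $e(\chi)\leq e(\pi)$ in $K$, so $e(\xi)\gor e(\chi)\leq e(\pi)$; applying $\gamma$ (monotonicity) and using \ref{lemma:Retraction} again, $\xi\tor\chi = \gamma(e(\xi)\gor e(\chi))\leq \gamma(e(\pi)) = \pi$. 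This establishes that $\mathbb{S}$ is a join-semilattice with bottom.

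For the continuous case, the only remaining point is the existence of arbitrary joins in $\mathbb{S}$. Given a family $\{\xi_i\}_{i\in I}\subseteq S$, define $\tOR_{i\in I}\xi_i := \gamma(\gOR_{i\in I} e(\xi_i))$, where the inner join exists in $K$ by K1'. The verification that this is the least upper bound in $S$ is the exact arbitrary-arity analogue of the binary argument above, again combining the unit inequality $\alpha\leq e(\gamma(\alpha))$, the order-embedding property of $e$, monotonicity of $\gamma$, and the retraction identity \ref{lemma:Retraction}. No step is a genuine obstacle here: the work is all routine once one recognizes that the adjunction $\gamma\dashv e$ is a reflection (witnessed by $\gamma\circ e = id_S$), whose general order-theoretic consequence is precisely that $S$ inherits from $K$ all joins that exist in $K$, computed by first embedding via $e$, taking the join in $K$, and then projecting via $\gamma$.
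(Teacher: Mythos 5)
Your proof is correct and follows essentially the same route as the paper: work with the order $S$ inherits from $K$, use the adjunction $\gamma\dashv e$ (unit inequality, monotonicity) together with the retraction identity $\gamma(e(\xi))=\xi$ to show $\gamma(e(\xi)\gor e(\chi))$ is the least upper bound, handle arbitrary joins in the continuous case by the same construction, and obtain the bottom element from $\gbot$ via $\gamma$. The only cosmetic difference is that the paper derives the bottom element from surjectivity of $\gamma$ rather than from the retraction identity, which is an equivalent argument.
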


\begin{proof}
By KK1, $S$ is a subposet of $K$. Let $\xi, \chi\in S$. Using KK2 and Lemma \ref{lemma:Retraction}, one shows that $\xi \tor \chi$ is a common upper bound of $\xi$ and $\chi$ w.r.t.~the  order $S$ inherits from $K$. Since $e$ and $\gamma$ are monotone, $\xi\leq \pi$ and $\chi\leq \pi$ imply that  $\xi \tor \chi = \gamma(e(\xi)\gor e(\chi))\leq \gamma (e(\pi)) = \pi$, the last equality due to Lemma \ref{lemma:Retraction}. This shows that $\xi \tor \chi$ is the least upper bound of $\xi$ and $\chi$ w.r.t.~the inherited order. Analogously one shows that, if $\mathbb{K}$ is  continuous  and $Y\subseteq S$, $\bigsqcup Y: = \gamma (\bigcup e[Y])$ is the least upper bound of $Y$. Finally,
$\gamma(\gbot)$  being the bottom element of $S$ follows from $\gbot$ being the bottom element of $K$ and the monotonicity and surjectivity of $\gamma$.
\end{proof}

\begin{remark}\label{rmk:SlightlyMore}
We have proved  a little more than what is stated in  Proposition \ref{prop:AlgebraicStructureOnKernels}. Namely, we have proved that all (finite) joins exist w.r.t.~the order that $S$ inherits from $K$, and hence the join-semilattice structure of $S$ is also in a sense inherited from $K$. However, this does not mean or imply that $S$ is a sub-join-semilattice of $K$, since joins in $S$ are `closures' of joins in $K$, and hence $\tor$ is certainly not the restriction of $\gor$ to $S$.
\end{remark}

\subsection{Measurable Kleene algebras and their kernels}
The results of Section \ref{ssec:KleeneAlgebras} apply in particular to measurable Kleene algebras, where in addition,  by definition, the operation $()^\star : K \to K$ is an interior operator on $K$ seen as a poset. By general order-theoretic facts (cf.~\cite[Chapter 7]{DaveyPriestley2002}) this means that  \[()^\star  =  e'\iota,\] where $\iota: K\twoheadrightarrow \mathsf{Range}(\star )$, defined by $\iota(\alpha) =  \alpha^\star$ for every $a\in K$, is the right adjoint of the natural embedding  $e': \mathsf{Range}(\star )\hookrightarrow K$,  i.e.\ for every $\alpha \in K$  and $\xi \in \mathsf{Range}(\star)$,
\[ e'(\xi)\leq \alpha\quad \mbox{ iff }\quad \xi \leq \iota(\alpha). \]
Moreover, Lemma \ref{lemma: ast and star surjective on special} guarantees that \[\mathsf{Range}(\ast ) = \mathsf{Range}(\star ) = \{\beta\in K\mid \gone\leq\beta \mbox{ and } \beta\cdot\beta\leq \beta\}.\]
Hence, $e'$ coincides with the natural embedding  $e: \mathsf{Range}(\ast )\hookrightarrow K$, which is then endowed with both the left adjoint and the right adjoint.

In what follows, we let  $S$ be the subposet of $K$ identified by \[\mathsf{Range}(\ast ) = \mathsf{Range}(\gamma) = \mathsf{Range}(\iota)  = \mathsf{Range}(\star).\] We will use the variables $\alpha, \beta$, possibly with sub- or superscripts, to denote elements of $K$, and $\pi, \xi, \chi$, possibly with sub- or superscripts, to denote elements of $S$.

\begin{lemma}
\label{lemma:Retraction and coretraction}
For every measurable Kleene algebra $\mathbb{K}$ and every $\xi \in S$,
\begin{equation}
 \gamma(e(\xi)) = \xi = \iota(e(\xi)).
\end{equation}
\end{lemma}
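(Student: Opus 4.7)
The plan is to derive the second equality by mimicking the adjunction-based argument that was already used to establish the first equality (from the preceding unnamed lemma), applied now on the other side, exploiting that in a measurable Kleene algebra the natural embedding $e$ coincides with $e'$ and is thus endowed with \emph{both} a left adjoint $\gamma$ and a right adjoint $\iota$ on the same subposet $S$.

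First, I would observe that the equality $\gamma(e(\xi)) = \xi$ is immediate, since measurable Kleene algebras are in particular Kleene algebras, and so the result \eqref{lemma:Retraction} applies verbatim. So the real content of the present lemma is the second equality, $\xi = \iota(e(\xi))$. To establish this, I would rely on two facts stated in the discussion preceding the lemma: (i) by Lemma \ref{lemma: ast and star surjective on special}, $\mathsf{Range}(\ast) = \mathsf{Range}(\star)$, and hence $e' = e : S \hookrightarrow K$ is a single order-embedding; and (ii) this embedding has $\iota$ as right adjoint, i.e.\ for every $\alpha \in K$ and $\xi \in S$,
\[ e(\xi) \leq \alpha \quad \text{iff} \quad \xi \leq \iota(\alpha). \]

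Using (ii) I would then argue as follows. For the inequality $\xi \leq \iota(e(\xi))$, instantiating the adjunction with $\alpha := e(\xi)$ reduces the claim to $e(\xi) \leq e(\xi)$, which is trivial. For the converse inequality $\iota(e(\xi)) \leq \xi$, I would use that $e$ is an order-embedding (and hence order-reflecting), so it suffices to check $e(\iota(e(\xi))) \leq e(\xi)$; but this in turn, by the adjunction applied with $\alpha := e(\xi)$ and the element $\iota(e(\xi)) \in S$ in place of $\xi$, is equivalent to $\iota(e(\xi)) \leq \iota(e(\xi))$, which holds trivially.

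There is no real obstacle here: the proof is a straightforward application of the unit/counit inequalities for the adjunction $e \dashv \iota$, together with the fact that $e$ is an order-embedding. The only point that requires a slight care is to notice that $e$ and $e'$ can be identified (thanks to Lemma \ref{lemma: ast and star surjective on special}), so that $\iota$ is a right adjoint to the \emph{same} embedding of which $\gamma$ is the left adjoint; this is what makes the two equalities in the statement symmetric instances of the same adjunction-based argument.
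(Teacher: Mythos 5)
Your proof is correct and follows essentially the same route as the paper: the first equality is inherited from the earlier retraction lemma, and the second is obtained from the unit/counit inequalities of the adjunction $e \dashv \iota$ together with the fact that $e$ is an order-embedding, exactly as in the paper's argument. Your additional remark that $e' = e$ via Lemma \ref{lemma: ast and star surjective on special} matches the discussion the paper places just before the lemma.
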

\begin{proof}
The first identity is shown in Lemma \ref{lemma:Retraction}. As to the second one, by adjunction, $\xi\leq \iota(e(\xi))$ iff  $e(\xi)\leq e(\xi)$, which always holds. As to the converse inequality $\iota(e(\xi))\leq \xi$, since $e$ is an order-embedding, it is enough to show that $e(\iota(e(\xi)))\leq  e(\xi) $, which by adjunction is equivalent to $\iota(e(\xi))\leq \iota(e(\xi))$, which always holds.
\end{proof}

\begin{definition}
\label{def:KernelMKA}
For any measurable Kleene algebra $\mathbb{K} = (K, \gor, \cdot, ()^\ast, ()^\star,\gone, \gbot)$, let the {\em kernel} of $\mathbb{K}$ be the structure $\mathbb{S} = (S, \tor, \tbot)$  defined as follows:
\begin{itemize}
\item[KK1.] $S: = \mathsf{Range}(\ast ) = \mathsf{Range}(\gamma) = \mathsf{Range}(\iota)  = \mathsf{Range}(\star)$;
\item[KK2.] $\xi \tor \chi: = \gamma(e(\xi) \gor e(\chi))$;
\item[KK3.]  $\tbot: = \gamma(\gbot)$.
\end{itemize}
\end{definition}

\subsection{Heterogeneous Kleene algebras}
\label{ssec:HeterogeneousKleene}

\begin{definition}
	\label{def:HeterogeneousKleeneAlgebras}
	A {\em heterogeneous Kleene algebra} is a tuple $\mathbb{H} = (\bbA, \bbS, \mand_1, \mand_2, \gamma, e)$ verifying the following conditions:
	
\begin{itemize}
\item[H1] $\bbA = (A, \tor, \cdot, \tone, \gbot)$ is such that $(A, \tor, \gbot)$ a join-semilattice with bottom element $\gbot$ and $(A, \cdot, \tone)$ a monoid with unit $\gone$, moreover $\cdot$ preserves finite joins in each coordinate, and  $\gbot$ is an annihilator for $\cdot$;
\item[H2] $\bbS = (S, \tor, \tbot)$ is a join-semilattice with bottom element $\tbot$;
\item[H3] $\mand_1: \bbS\times \bbA \to \bbA$  preserves finite joins in its second coordinate, is monotone in its first coordinate,  and has unit $\gone$ in its second coordinate, and $\mand_2: \bbA\times \bbS \to \bbA$  preserves finite joins in its first coordinate, is monotone in its second coordinate, and has unit $\gone$ in its first coordinate. Moreover, for all $\alpha\in \bbA$ and $\xi\in S$,
\begin{equation}\label{eq:ThisIsWhereTheBadnessLies}  \xi \mand_1 \alpha =  e(\xi) \cdot \alpha \quad \mbox{ and } \quad \alpha \mand_2 \xi = \alpha \cdot e(\xi);\end{equation}\item[H4] $\gamma: \bbA \twoheadrightarrow \bbS$ and $e: \bbS \hookrightarrow \bbA$ are such that $\gamma\dashv e$ and $\gamma( e(\xi)) = \xi$ for all $\xi \in S$;

\item[H5] $\gone\leq e(\xi)$, and $e(\xi) \cdot e(\xi) \leq e(\xi)$ for any $\xi \in \bbS$;


\item[H6] $\alpha \cdot \beta \leq \beta$ implies $\gamma(\alpha) \mand_1 \beta \leq \beta$,  and  $ \beta \cdot \alpha \leq \beta$ implies $\beta\mand_2  \gamma(\alpha)  \leq \beta$ for all $\alpha, \beta\in \bbA$.
\end{itemize}

A heterogeneous Kleene algebra is {\em continuous} if
\begin{itemize}
\item[H1']  $(A, \tor, \gbot)$ is a {\em complete} join-semilattice and  $\cdot$ preserves {\em arbitrary} joins in each coordinate;
\item[H2'] $\bbS = (S, \tor, \tbot)$ is a {\em complete} join-semilattice;
\item[H7] $e(\gamma(\alpha)) = \gOR\alpha^n$ for any $n \in \mathbb{N}$.
\end{itemize}	
\end{definition}

\begin{definition}
\label{def:Kplus}
For any  Kleene algebra $\mathbb{K} = (K, \gor, \cdot, ()^\ast, \gone, \gbot)$, let \[\mathbb{K}^+ = (\bbA, \bbS,  \mand_1, \mand_2, \gamma, e)\] be the structure defined as follows:
\begin{enumerate}
\item $\bbA := (K, \gor, \cdot, \gone, \gbot)$ is the $()^\ast$-free reduct of $\mathbb{K}$;
\item $\bbS$ is the kernel of $\mathbb{K}$ (cf.~Definition \ref{def:Kernel});
\item $\gamma: \bbA\twoheadrightarrow \bbS$  and $e: \bbS \hookrightarrow \bbA$ are defined as the maps into which the closure operator $()^\ast$ decomposes (cf.~discussion before Lemma \ref{lemma:Retraction});
\item $\mand_1$ (resp.~$\mand_2$) is  the restriction of $\cdot$  to $\mathbb{S}$ in the first (resp.~second) coordinate.
\end{enumerate}
\end{definition}

\begin{proposition}
\label{prop:FromSingleToMulti}
For any (continuous) Kleene algebra $\mathbb{K}$, the structure $\mathbb{K}^+$ defined above is  a (continuous) heterogeneous  Kleene algebra.
\end{proposition}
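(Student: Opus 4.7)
My plan is a systematic verification of conditions H1--H7 (and H1', H2', H7 for the continuous case) of Definition \ref{def:HeterogeneousKleeneAlgebras}, against the defining axioms of a Kleene algebra and the results already established in Sections \ref{sec:Preliminaries} and \ref{sec:SemanticEnvironment}. The main work is bookkeeping; no new conceptual idea is needed.

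First I would dispatch the straightforward items. Condition H1 is immediate from K1 and K2, since by construction $\bbA$ is the $()^\ast$-free reduct of $\mathbb{K}$. Condition H2 is exactly Proposition \ref{prop:AlgebraicStructureOnKernels}. For H3, the defining equations \eqref{eq:ThisIsWhereTheBadnessLies} hold by construction, since $\mand_i$ is the restriction of $\cdot$ along $e: \bbS\hookrightarrow\bbA$; the required join-preservation, monotonicity, and unit properties of each $\mand_i$ then transfer directly from the corresponding properties of $\cdot$ listed in K2 (for instance $\xi\mand_1\gone = e(\xi)\cdot\gone = e(\xi)$, and symmetrically for $\mand_2$). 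Condition H4 packages the remark made just before Lemma \ref{lemma:Retraction}, namely that the closure operator $()^\ast = e\gamma$ decomposes into an adjunction $\gamma\dashv e$, together with the retraction identity $\gamma(e(\xi)) = \xi$ established in Lemma \ref{lemma:Retraction}.

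The key observation that makes H5 and H6 routine is that, for every $\xi\in S$, the element $e(\xi)$ is a fixed point of $()^\ast$: this follows at once from $S = \mathsf{Range}(\ast)$ combined with Lemma \ref{lem:closure}(2). Then for H5, setting $\alpha := e(\xi)$ in K3 gives $\gone \leq \gone \gor \alpha\cdot\alpha^\ast \leq \alpha^\ast = e(\xi)$ and $e(\xi)\cdot e(\xi) = \alpha^\ast\cdot\alpha^\ast \leq \alpha^\ast = e(\xi)$. For H6, we rewrite $\gamma(\alpha)\mand_1\beta = e(\gamma(\alpha))\cdot\beta = \alpha^\ast\cdot\beta$ using \eqref{eq:ThisIsWhereTheBadnessLies}, and conclude by K4; the symmetric half uses K5.

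For the continuous case one adds H1', H2', H7. Conditions H1' and H2' follow from K1', K2' together with the completeness strengthening of Proposition \ref{prop:AlgebraicStructureOnKernels} (cf.~also Remark \ref{rmk:SlightlyMore}). Condition H7 reduces to $e(\gamma(\alpha)) = \alpha^\ast = \gOR_{n\geq 0}\alpha^n$, which combines the definitional decomposition $()^\ast = e\gamma$ with K6. I do not anticipate any real obstacle; the only subtlety worth flagging is that ``restriction of $\cdot$ to $\bbS$ in a coordinate'' must be read through the embedding $e$, so that each $\mand_i$ actually lands in $\bbA$ and the typing $\mand_1:\bbS\times\bbA\to\bbA$, $\mand_2:\bbA\times\bbS\to\bbA$ is respected --- which is exactly the content of \eqref{eq:ThisIsWhereTheBadnessLies}.
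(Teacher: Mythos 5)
Your proposal is correct and follows essentially the same route as the paper's own proof: each condition H1--H7 (and H1', H2', H7) is matched to the corresponding Kleene axiom or previously established result (K1--K2 for H1, Proposition \ref{prop:AlgebraicStructureOnKernels} for H2/H2', the adjunction decomposition and Lemma \ref{lemma:Retraction} for H4, K3 for H5, K4--K5 for H6, K1', K2', K6 for the continuous case). The extra detail you supply (the fixed-point observation $e(\xi)^\ast = e(\xi)$ for H5, and the rewriting $\gamma(\alpha)\mand_1\beta = \alpha^\ast\cdot\beta$ for H6) is accurate and merely makes explicit what the paper leaves implicit.
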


\begin{proof}
Since $\mathbb{K}$ verifies by assumption  K1 and K2,  $\mathbb{K}^+$ verifies H1. Condition H2 (resp.~H2') is verified by Proposition \ref{prop:AlgebraicStructureOnKernels}. Condition H3 immediately follows from the definition of $\mand_1$ and $\mand_2$ in $\mathbb{K}^+$. Condition H4 holds by Lemma \ref{lem:closure} and \ref{lemma:Retraction}. Condition H5 follows from $\mathbb{K}$ verifying  assumption  K3. Condition H6 follows from $\mathbb{K}$ verifying  assumption  K4 and K5. If $\mathbb{K}$ is continuous, then $\mathbb{K}$ verifies conditions K1', K2' and K6, which guarantee that $\mathbb{K}^+$ verifies H1' and H7.
\end{proof}

\begin{definition}
\label{def:Hplus}
 For any  heterogeneous Kleene algebra $\mathbb{H} = (\bbA, \bbS, \mand_1, \mand_2, \gamma, e)$, let $\mathbb{H}_+: = (\bbA, ()^\ast)$, where  $()^\ast: \bbA\rightarrow \bbA$ is defined by  $\alpha^\ast := e(\gamma(\alpha))$ for every $\alpha \in \bbA$.
\end{definition}

\begin{proposition}
\label{prop:ReverseEngineering}
 For any (continuous) heterogeneous Kleene algebra $\mathbb{H}= (\bbA, \bbS, \mand_1,$ $\mand_2, \gamma, e)$, the structure $\mathbb{H}_+$ defined above is a (continuous) Kleene algebra. Moreover, the kernel of $\mathbb{H}_+$ is join-semilattice-isomorphic  to $\bbS$. 
\end{proposition}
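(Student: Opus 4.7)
The plan is to verify axioms K1--K5 (and K1', K2', K6 in the continuous case) of Definition \ref{def:Ka} for $\mathbb{H}_+$, then to exhibit the required join-semilattice isomorphism between $\bbS$ and the kernel of $\mathbb{H}_+$. Conditions K1 and K2 are immediate transcriptions of H1 (and in the continuous case, H1' yields K1' and K2'). For K3, I would apply H5 to $\xi := \gamma(\alpha)$: this yields $\gone \leq e(\gamma(\alpha)) = \alpha^\ast$ and $\alpha^\ast \cdot \alpha^\ast = e(\gamma(\alpha)) \cdot e(\gamma(\alpha)) \leq e(\gamma(\alpha)) = \alpha^\ast$. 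Combined with the unit axiom $\alpha \leq e(\gamma(\alpha)) = \alpha^\ast$, which follows from H4 by adjunction applied to $\gamma(\alpha) \leq \gamma(\alpha)$, this gives $\alpha \cdot \alpha^\ast \leq \alpha^\ast \cdot \alpha^\ast \leq \alpha^\ast$ and dually $\alpha^\ast \cdot \alpha \leq \alpha^\ast$.

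For K4, assume $\alpha \cdot \beta \leq \beta$. By H6 this yields $\gamma(\alpha) \mand_1 \beta \leq \beta$, and using the equation $\gamma(\alpha) \mand_1 \beta = e(\gamma(\alpha)) \cdot \beta = \alpha^\ast \cdot \beta$ from H3 (which applies since $\gamma(\alpha) \in \bbS$), we conclude $\alpha^\ast \cdot \beta \leq \beta$. K5 is entirely symmetric, using the second halves of H3 and H6. For continuity, K6 is essentially H7 after unpacking the definition $\alpha^\ast := e(\gamma(\alpha))$.

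To identify the kernel $\mathbb{S}'$ of $\mathbb{H}_+$ with $\bbS$, first observe that, by surjectivity of $\gamma$ onto $\bbS$ together with Definition \ref{def:Hplus},
\[\mathsf{Range}(()^\ast) = \mathsf{Range}(e \circ \gamma) = e[\gamma[\bbA]] = e[\bbS],\]
so the underlying set of $\mathbb{S}'$ is exactly $e[\bbS]$. I would then take the candidate isomorphism $\phi: \bbS \to \mathbb{S}'$ given by $\phi(\xi) := e(\xi)$; this is a bijection because $e$ is injective (being an order-embedding by H4), and it is an order-embedding by the same token. To verify that $\phi$ preserves joins, I would use that $\gamma$, being a left adjoint by H4, preserves all existing joins, together with $\gamma(e(\xi)) = \xi$ (H4), obtaining
\[\gamma(e(\xi) \gor e(\chi)) = \gamma(e(\xi)) \tor \gamma(e(\chi)) = \xi \tor \chi;\]
applying $e$ to both sides yields $e(\xi \tor \chi) = e(\gamma(e(\xi) \gor e(\chi)))$, which by KK2 in Definition \ref{def:Kernel} is precisely $\phi(\xi) \tor' \phi(\chi)$, the join in $\mathbb{S}'$. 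Preservation of the bottom is analogous via KK3.

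I do not anticipate any genuine obstacle: the verification of K1--K6 is largely a routine translation through the defining equation $\alpha^\ast = e(\gamma(\alpha))$ and the axioms H1--H7, while the kernel statement rests on the single non-trivial observation that left adjoints preserve joins. The only point that requires a bit of care is to invoke H3 correctly when converting between the heterogeneous operations $\mand_1, \mand_2$ and the ambient multiplication $\cdot$, since this conversion is precisely what allows conditions H5 and H6 to yield the single-type axioms K3--K5.
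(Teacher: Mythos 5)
Your proposal is correct and follows essentially the same route as the paper: K4--K5 via H6 together with the conversion equation \eqref{eq:ThisIsWhereTheBadnessLies} from H3, K3 from H5 (plus the adjunction unit from H4), K1', K2', K6 from H1' and H7, and the kernel isomorphism given by $e$ corestricted to $\mathsf{Range}(()^\ast)$, with join- and bottom-preservation reduced to the fact that the left adjoint $\gamma$ preserves joins and to the retraction identity of H4 -- exactly the identities the paper labels (AK2)/(AK3) and checks against KK2/KK3. No gaps.
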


\begin{proof}
As to the first part of the statement, we only need to show that $()^\ast$ satisfies conditions K3-K5 (resp.~K1', K2' and K6) of Definition \ref{def:Ka}.  Condition K3  easily follows from assumption H5 and the proof is omitted. As to K4, let $\alpha, \beta\in \bbA$ such that $\alpha \cdot \beta \leq \beta$.
\begin{center}
\begin{tabular}{r c l l}
 $\alpha \cdot \beta \leq \beta $ & $\Rightarrow$& $\gamma(\alpha) \mand_1 \beta \leq \beta$& (H6)\\
& $\Rightarrow$& $e (\gamma(\alpha)) \cdot \beta \leq \beta$& (H3)\\
& $\Rightarrow$& $\alpha^\ast \cdot \beta \leq \beta$& (definition of $()^\ast$)\\
\end{tabular}
\end{center}
 The proof of K5 is analogous. Conditions K1', K2' and K6 readily follow from assumptions H1' and H7. 

This completes the proof of the first part of the statement. As to the second part, 
let us show preliminarily that the following identities hold:
\begin{itemize}
\item[AK2.] $\xi \tor \chi := \gamma(e(\xi) \gor e (\chi))$ for all $\xi, \chi\in \mathbb{S}$;
\item[AK3.] $\tbot := \gamma(\gbot)$.
\end{itemize}
Being a left adjoint, $\gamma$  preserves existing joins.  Hence, $\gamma(\gbot) = \tbot$, which proves (AK2), and, using H4,  $\gamma(e(\xi) \gor e (\chi)) =  \gamma(e(\xi)) \tor \gamma(e (\chi)) =  \xi \tor \chi$, which proves (AK3).
To show that the kernel of $\mathbb{H}_+$ and $\mathbb{S}$ are isomorphic as  (complete) join-semilattices, notice that the domain of the kernel of $\mathbb{H}_+$ is defined  as  $K_{\ast}: =\mathsf{Range}(()^\ast)$ $=\mathsf{Range}(e\circ \gamma) = \mathsf{Range}(e)$. Since $e$ is an order-embedding (which is easily shown using H4), this implies that $K_{\ast}$, regarded as a sub-poset of $\mathbb{A}$, is order-isomorphic  to the domain of $\mathbb{S}$ with its join-semilattice order. Let $i: \mathbb{S}\to \mathbb{K}_{\ast}$ denote the order-isomorphism between $\mathbb{S}$ and $\mathbb{K}_{\ast}$. To show that $\bbS = (S, \tor_{\bbS}, \tbot)$ and $\mathbb{K}_{\ast} = (K_\ast, \tor_{\mathbb{K}_{\ast}}, 0_{s\ast})$ are isomorphic as join-semilattices, we need to show that for all $\xi, \chi\in \mathbb{S}$,
\[i(\xi \tor_{\mathbb{S}}\chi) = i(\xi)\tor_{\mathbb{K}_{\ast}} i(\chi) \quad\mbox{ and }\quad i(\tbot) = 0_{s\ast}.\] Let $e': \mathbb{K}_{\ast}\hookrightarrow\mathbb{A}$ and $\gamma': \mathbb{A}\twoheadrightarrow\mathbb{K}_{\ast}$ be the pair of  adjoint maps arising from $\ast$.    Thus, $e = e' i$ and $\gamma' = i \gamma$, and so,
\begin{center}
\begin{tabular}{cc ll}
$i(\xi)\tor_{\mathbb{K}_{\ast}} i(\chi)$ & $=$ & $\gamma'(e'(i(\xi)) \gor e'(i(\chi)))$ & (definition of $\tor_{\mathbb{K}_{\ast}}$)\\
&$=$ & $\gamma'(e (\xi) \gor e(\chi))$ & ($e = e' i$)\\
&$=$ & $i(\gamma(e (\xi) \gor e(\chi)))$ & ($\gamma' = i\gamma$)\\
&$=$ & $i(\xi \tor_{\mathbb{S}}\chi)$. & (AK2)\\
&& &\\
$0_{s\ast}$ & $=$ & $ \gamma'(\gbot)$& (KK3) \\
 & $=$  & $i(\gamma(\gbot))$ & $(\gamma' = i \gamma)$ \\
& $=$ & $i(\tbot)$ & (AK3)\\
\end{tabular}
\end{center}
\end{proof}

The following proposition immediately follows from Propositions \ref{prop:FromSingleToMulti} and \ref{prop:ReverseEngineering}:

\begin{proposition}
\label{prop:AplusPlus}
For any Kleene algebra $\mathbb{K}$ and  heterogeneous Kleene algebra $\mathbb{H}$,
\begin{center}
$ \mathbb{K} \cong (\mathbb{K}^+)_+ \quad \mbox{and}\quad \mathbb{H} \cong (\mathbb{H}_+)^+.$
\end{center}
Moreover, these correspondences restrict to continuous Kleene algebras and continuous heterogeneous Kleene algebras.
\end{proposition}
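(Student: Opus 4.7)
The plan is to leverage the two constructions $(-)^+$ and $(-)_+$ established in Definitions~\ref{def:Kplus} and~\ref{def:Hplus} and verify that they are mutually inverse up to isomorphism. The argument is essentially bookkeeping, since Propositions~\ref{prop:FromSingleToMulti} and~\ref{prop:ReverseEngineering} already establish that both constructions land in the correct class and that on the kernel side an isomorphism of join-semilattices is already available.

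First, for $\mathbb{K} \cong (\mathbb{K}^+)_+$: by Definition~\ref{def:Kplus}, $\mathbb{K}^+$ has $\bbA$ as the $()^\ast$-free reduct of $\mathbb{K}$ and $e,\gamma$ chosen so that $()^\ast = e \circ \gamma$ (the standard factorization of a closure operator discussed before Lemma~\ref{lemma:Retraction}). By Definition~\ref{def:Hplus}, $(\mathbb{K}^+)_+$ takes that same $\bbA$ and redefines $\alpha^\ast := e(\gamma(\alpha))$, which gives back the original $()^\ast$ on $\mathbb{K}$. All other operations coincide on the nose, so the identity on $K$ is the required isomorphism. No genuine work is needed here.

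The slightly more substantial direction is $\mathbb{H} \cong (\mathbb{H}_+)^+$ for a heterogeneous Kleene algebra $\mathbb{H} = (\bbA,\bbS,\mand_1,\mand_2,\gamma,e)$. The $\bbA$-component of $(\mathbb{H}_+)^+$ is the $()^\ast$-free reduct of $\mathbb{H}_+$, which is $\bbA$ itself. The $\bbS$-component is the kernel of $\mathbb{H}_+$, which by Proposition~\ref{prop:ReverseEngineering} is join-semilattice-isomorphic to $\bbS$ via the order-isomorphism $i: \bbS \to \mathbb{K}_\ast$ fitting into $e = e' \circ i$ and $\gamma' = i \circ \gamma$, where $e',\gamma'$ arise from the closure operator of $\mathbb{H}_+$. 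It then remains to check that $i$ is compatible with the heterogeneous structure. Compatibility with $\gamma,e$ is exactly the relations $\gamma' = i\gamma$ and $e = e' i$ recorded in Proposition~\ref{prop:ReverseEngineering}. Compatibility with $\mand_1,\mand_2$ is forced by the crucial identity~(\ref{eq:ThisIsWhereTheBadnessLies}) in axiom H3: for $\xi \in \bbS$ and $\alpha\in\bbA$,
\[
i(\xi)\mand_1' \alpha \;=\; e'(i(\xi)) \cdot \alpha \;=\; e(\xi)\cdot \alpha \;=\; \xi\mand_1 \alpha,
\]
and symmetrically for $\mand_2$, where $\mand_1',\mand_2'$ denote the operations of $(\mathbb{H}_+)^+$ and $\cdot$ is the monoid operation of $\bbA$, common to both structures.

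The main obstacle, which is conceptual rather than technical, is precisely the one addressed by axiom H3: without the stipulation that $\xi \mand_1 \alpha = e(\xi)\cdot\alpha$, one could not recover the heterogeneous operations $\mand_1,\mand_2$ from the single-type data in $\mathbb{H}_+$, and the round-trip $\mathbb{H} \mapsto (\mathbb{H}_+)^+$ would in general fail. Once H3 is in force, the check becomes routine. Finally, the claim that the correspondences restrict to the continuous subclasses is immediate: Proposition~\ref{prop:FromSingleToMulti} shows $(-)^+$ sends continuous Kleene algebras to continuous heterogeneous ones, and Proposition~\ref{prop:ReverseEngineering} shows $(-)_+$ goes the other way, so the isomorphisms constructed above preserve the continuity conditions H1', H2', H7 (equivalently K1', K2', K6).
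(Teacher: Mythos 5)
Your proposal is correct and takes essentially the same route as the paper: the paper simply asserts that the statement ``immediately follows'' from Propositions~\ref{prop:FromSingleToMulti} and~\ref{prop:ReverseEngineering}, and your argument is exactly the bookkeeping implicit in that assertion (the identity map on $K$ for $\mathbb{K}\cong(\mathbb{K}^+)_+$, and the kernel isomorphism $i$ together with the identity on $\bbA$, with H3 used to recover $\mand_1,\mand_2$, for $\mathbb{H}\cong(\mathbb{H}_+)^+$). Your explicit verification of compatibility with $\gamma$, $e$, $\mand_1$, $\mand_2$ and of the continuity conditions is a faithful, slightly more detailed rendering of what the paper leaves to the reader.
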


\subsection{Heterogeneous measurable Kleene algebras}
\label{ssec:HeterogeneousKleene meas}
The extra conditions of measurable Kleene algebras allow for their `heterogeneous presentation' (encoded in the definition below) being much simpler than the one for Kleene algebras:
\begin{definition}
	\label{def:HeterogeneousKleeneAlgebras meas}
	A {\em heterogeneous measurable Kleene algebra} is a tuple $\mathbb{H} = (\bbA, \bbS, \iota, \gamma, e)$ verifying the following conditions:
	
\begin{itemize}
\item[HM1] $\bbA = (A, \tor, \cdot, \tone, \gbot)$ is such that $(A, \tor, \gbot)$ a complete join-semilattice with bottom element $\gbot$ and $(A, \cdot, \tone)$ a monoid with unit $\gone$, moreover $\cdot$ preserves arbitrary joins in each coordinate, and  $\gbot$ is an annihilator for $\cdot$;
\item[HM2] $\bbS = (S, \tor, \tbot)$ is a complete join-semilattice with bottom element $\tbot$;
\item[HM3] $e(\gamma(\alpha)) = \gOR\alpha^n$ for any $n \in \mathbb{N}$.
\item[HM4]  $\gamma: \bbA \twoheadrightarrow \bbS$ and $\iota: \bbA \twoheadrightarrow \bbS$ and $e: \bbS \hookrightarrow \bbA$ are such that $\gamma\dashv e\dashv \iota$ and $\gamma( e(\xi)) = \xi = \iota( e(\xi))$ for all $\xi \in S$;
\item[HM5] $\gone\leq e(\xi)$, and $e(\xi) \cdot e(\xi) \leq e(\xi)$ for any $\xi \in \bbS$;
\item[HM6] For any $\beta\in \bbA$, if $\gone\leq \beta$ and $\beta \cdot \beta \leq \beta$, then  $\gamma (\beta)\leq \iota(\beta)$.
\end{itemize}
\end{definition}

\begin{definition}
\label{def:Kplus meas}
For any measurable Kleene algebra $\mathbb{K} = (K, \gor, \cdot, ()^\ast, ()^\star, \gone, \gbot)$, let \[\mathbb{K}^+ = (\bbA, \bbS,  \iota, \gamma, e)\] be the structure defined as follows:
\begin{enumerate}
\item $\bbA := (K, \gor, \cdot, \gone, \gbot)$ is the $\{()^\ast, ()^\star\}$-free reduct of $\mathbb{K}$;
\item $\bbS$ is the kernel of $\mathbb{K}$ (cf.~Definition \ref{def:KernelMKA});
\item $\gamma: \bbA\twoheadrightarrow \bbS$  and $e: \bbS \hookrightarrow \bbA$ are defined as the maps into which the closure operator $()^\ast$ decomposes, and $\iota: \bbA\twoheadrightarrow \bbS$  and $e: \bbS \hookrightarrow \bbA$ are defined as the maps into which the interior operator $()^\star$ decomposes (cf.~discussion before Lemma \ref{lemma:Retraction and coretraction}).
\end{enumerate}
\end{definition}

\begin{proposition}
\label{prop:FromSingleToMulti meas}
For any measurable Kleene algebra $\mathbb{K}$, the structure $\mathbb{K}^+$ defined above is  a  heterogeneous measurable Kleene algebra.
\end{proposition}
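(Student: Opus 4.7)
The plan is to verify each of the six conditions HM1--HM6 in Definition \ref{def:HeterogeneousKleeneAlgebras meas} for the structure $\mathbb{K}^+$, exploiting the fact that measurable Kleene algebras are by definition continuous (MK1), together with the order-theoretic decomposition of closure and interior operators into adjoint pairs, and the crucial Lemma \ref{lemma: ast and star surjective on special} which identifies the ranges of $(\,)^\ast$ and $(\,)^\star$.

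First, HM1 is immediate from MK1, which forces $\mathbb{K}$ to be a continuous Kleene algebra, so that K1, K1', K2, K2' together give exactly the structure required on the $\{()^\ast,()^\star\}$-free reduct. HM2 follows from Proposition \ref{prop:AlgebraicStructureOnKernels} together with Remark \ref{rmk:SlightlyMore}: since $\mathbb{K}$ is continuous, the kernel $\mathbb{S}$ is a complete join-semilattice with bottom $\tbot = \gamma(\gbot)$. HM3 reduces to the chain of identities $e(\gamma(\alpha)) = \alpha^\ast = \bigcup_{n\geq 0}\alpha^n$, the first by the definition of the adjoint decomposition of $(\,)^\ast$ (discussed before Lemma \ref{lemma:Retraction}) and the second by K6, which is available thanks to continuity.

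For HM4, the key preliminary observation is that, by Lemma \ref{lemma: ast and star surjective on special}, $\mathsf{Range}(\ast) = \mathsf{Range}(\star)$, so the natural embedding arising from the interior operator $(\,)^\star$ coincides with the $e$ arising from $(\,)^\ast$. Hence one and the same order-embedding $e: \bbS\hookrightarrow\bbA$ has $\gamma$ as left adjoint (from the closure operator $(\,)^\ast$) and $\iota$ as right adjoint (from the interior operator $(\,)^\star$). The retraction identities $\gamma(e(\xi))=\xi=\iota(e(\xi))$ are then exactly the content of Lemma \ref{lemma:Retraction and coretraction}. HM5 is the translation of K3 along this decomposition: for any $\xi\in\bbS$, $e(\xi)$ lies in $\mathsf{Range}(\ast)$, which by Lemma \ref{lemma: ast and star surjective on special} equals $\{\beta\in K\mid \gone\leq\beta \text{ and }\beta\cdot\beta\leq\beta\}$, giving both $\gone\leq e(\xi)$ and $e(\xi)\cdot e(\xi)\leq e(\xi)$ at once.

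The step I expect to require slightly more care is HM6, but it still reduces cleanly to Lemma \ref{lemma: ast and star surjective on special}. Given $\beta\in\bbA$ with $\gone\leq\beta$ and $\beta\cdot\beta\leq\beta$, that lemma yields $\beta^\ast = \beta = \beta^\star$, i.e.\ $e(\gamma(\beta)) = e(\iota(\beta))$; since $e$ is an order-embedding, we conclude $\gamma(\beta) = \iota(\beta)$, which in particular gives $\gamma(\beta)\leq\iota(\beta)$ as required (indeed, something strictly stronger than HM6 holds on such $\beta$). Assembling these six verifications completes the proof. No condition corresponding to the heterogeneous product operations $\mand_1,\mand_2$ needs to be checked here, since the signature of heterogeneous measurable Kleene algebras in Definition \ref{def:HeterogeneousKleeneAlgebras meas} omits them; this is precisely the simplification afforded by the extra structure of measurable Kleene algebras.
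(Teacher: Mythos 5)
Your proposal is correct and follows essentially the same route as the paper's proof: HM1--HM3 and HM5 from continuity and K3/K6 together with Proposition \ref{prop:AlgebraicStructureOnKernels}, HM4 from Lemma \ref{lemma:Retraction and coretraction} (via the identification $\mathsf{Range}(\ast)=\mathsf{Range}(\star)$ so that one embedding $e$ carries both adjoints), and HM6 from Lemma \ref{lemma: ast and star surjective on special} plus injectivity of $e$. The only differences are presentational, e.g.\ you note that HM6 in fact holds with equality, which the paper's argument also yields implicitly.
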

\begin{proof}
Since $\mathbb{K}$ verifies by assumption  K1', K2, and K6,  $\mathbb{K}^+$ verifies HM1. Condition HM2  is verified by Proposition \ref{prop:AlgebraicStructureOnKernels}. Condition HM3 immediately follows from the definition of $()^\ast$ and assumption K6. Condition HM4 holds by Lemma \ref{lemma:Retraction and coretraction}. Condition HM5 follows from $\mathbb{K}$ verifying  assumption  K3.
As to condition HM6, if $\gone \leq \beta$ and $\beta\cdot \beta\leq \beta$, then by Lemma \ref{lemma: ast and star surjective on special}, $e(\gamma(\beta)) = \beta^\ast =  \beta^\star = e(\iota(\beta))$, which implies, since $e$ is injective, that $\gamma(\beta)\leq \iota(\beta)$, as required.
\end{proof}

\begin{definition}
\label{def:Hplus}
 For any  heterogeneous measurable Kleene algebra $\mathbb{H} = (\bbA, \bbS, \iota, \gamma, e)$, let $\mathbb{H}_+: = (\bbA, ()^\ast, ()^\star)$, where  $()^\ast: \bbA\rightarrow \bbA$ and $()^\star: \bbA\rightarrow \bbA$ are respectively defined by  $\alpha^\ast := e(\gamma(\alpha))$ and $\alpha^\star := e(\iota(\alpha))$ for every $\alpha \in \bbA$.
\end{definition}

\begin{proposition}
\label{prop:ReverseEngineering meas}
 For any  heterogeneous measurable Kleene algebra $\mathbb{H}= (\bbA, \bbS, \iota, \gamma, e)$, the structure $\mathbb{H}_+$ defined above is a measurable Kleene algebra. Moreover, the kernel of $\mathbb{H}_+$ is join-semilattice-isomorphic  to $\bbS$. 
\end{proposition}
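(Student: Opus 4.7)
The strategy is to verify conditions MK1--MK5 of Definition \ref{def:MKa} one at a time for $\mathbb{H}_+$, and then to deduce the isomorphism of kernels essentially along the lines of the second part of Proposition \ref{prop:ReverseEngineering}. All the ingredients we need are supplied by HM1--HM6: HM1 provides the continuous join-semilattice and monoid structure, HM3 recovers $e(\gamma(\alpha))=\bigsqcup \alpha^n$, HM4 gives the adjunctions and the retraction identities, HM5 ensures that images of $e$ behave like ``special'' elements, and HM6 gives the only link between $\gamma$ and $\iota$.

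For MK1, I would first check K1, K1$'$, K2, K2$'$ directly from HM1. For K3, K4, K5, K6 I would exploit the representation $\alpha^\ast = e(\gamma(\alpha)) = \bigsqcup_{n\geq 0}\alpha^n$ guaranteed by HM3, together with complete distributivity of $\cdot$ over joins. Specifically, K6 is literally HM3; K3 follows because $\gone = \alpha^0\leq \bigsqcup \alpha^n = \alpha^\ast$ and $\alpha\cdot\alpha^\ast = \bigsqcup \alpha^{n+1}\leq \alpha^\ast$, with $\alpha^\ast\cdot\alpha^\ast = \bigsqcup_{n,m}\alpha^{n+m}\leq \alpha^\ast$ analogously. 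For K4 (and symmetrically K5), assuming $\alpha\cdot\beta\leq \beta$, a straightforward induction on $n$ yields $\alpha^n\cdot\beta\leq \beta$, hence $\alpha^\ast\cdot \beta = (\bigsqcup_n \alpha^n)\cdot \beta = \bigsqcup_n(\alpha^n\cdot\beta)\leq \beta$ by continuity. Notice that, unlike in Proposition \ref{prop:ReverseEngineering}, we bypass H6 altogether; HM3 plus continuity makes it redundant.

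Conditions MK2--MK4 are quick: MK2 follows from the monotonicity of both $e$ (order-embedding) and $\iota$ (right adjoint); MK3 follows from applying HM5 to $\iota(\alpha)\in\bbS$, which gives $\gone\leq e(\iota(\alpha)) = \alpha^\star$ and $\alpha^\star\cdot\alpha^\star\leq \alpha^\star$; and MK4 follows from the counit inequality $e(\iota(\alpha))\leq\alpha$ of the adjunction $e\dashv \iota$ and from HM4 applied to $\iota(\alpha)$, giving $\alpha^{\star\star} = e(\iota(e(\iota(\alpha)))) = e(\iota(\alpha)) = \alpha^\star$.

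The step I expect to be the main obstacle is MK5, which is the only place where HM6 is actually put to work. Assuming $\beta\leq\alpha$, $\gone\leq\beta$ and $\beta\cdot\beta\leq\beta$, I would first argue that $\beta$ lies in the image of $e$: by induction $\beta^n\leq\beta$ for every $n\geq 0$, so HM3 gives $e(\gamma(\beta)) = \bigsqcup_n \beta^n = \beta$. Then HM6 yields $\gamma(\beta)\leq \iota(\beta)$; applying $e$ (monotone) gives $\beta = e(\gamma(\beta))\leq e(\iota(\beta))$, and monotonicity of $\iota$ combined with $\beta\leq\alpha$ yields $e(\iota(\beta))\leq e(\iota(\alpha)) = \alpha^\star$, as desired.

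Finally, for the kernel isomorphism, I would observe that $\mathsf{Range}((\,)^\ast) = \mathsf{Range}(e\circ\gamma) = \mathsf{Range}(e)$, since $\gamma$ is surjective, and that $e$ is an order-embedding by HM4. Then the identities $\gamma(\gbot) = \tbot$ and $\gamma(e(\xi)\gor e(\chi)) = \xi\tor\chi$ (the latter because left adjoints preserve joins and by the retraction part of HM4) allow us to transport the semilattice structure on the kernel of $\mathbb{H}_+$ back to $\bbS$ along $e$, exactly as in the second half of the proof of Proposition \ref{prop:ReverseEngineering}. The completeness of $\bbS$ then matches the completeness of $\mathsf{Range}(e)$ inherited from HM1$'$.
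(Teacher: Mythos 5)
Your proposal is correct, and its overall shape (verify the MK axioms from HM1--HM6, use HM5 for MK3, the adjunction $e\dashv\iota$ for MK2/MK4, HM6 for MK5, then transport the kernel structure along the order-embedding $e$) matches the paper's proof. The one place where you genuinely diverge is instructive: the paper dispatches K1$'$, K2$'$, K3--K6 by referring back to the proof of Proposition \ref{prop:ReverseEngineering}, whose treatment of K4/K5 rests on H3 and H6 --- conditions that are not part of the heterogeneous \emph{measurable} signature HM1--HM6. You instead derive K3--K5 directly from HM3 together with complete join-preservation of $\cdot$ (induction giving $\alpha^n\cdot\beta\leq\beta$, then $\alpha^\ast\cdot\beta=\gOR_n(\alpha^n\cdot\beta)\leq\beta$), which is self-contained in the HM axioms and makes the cross-reference unnecessary; this is arguably the cleaner reading of what the paper intends. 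Conversely, your MK5 argument is slightly more roundabout than the paper's: you first show $e(\gamma(\beta))=\beta$ via HM3 and then apply $e$ to $\gamma(\beta)\leq\iota(\beta)$, whereas the paper simply notes that $\beta\leq e(\iota(\beta))$ is, by the adjunction $\gamma\dashv e$, \emph{equivalent} to $\gamma(\beta)\leq\iota(\beta)$, i.e.\ to HM6 (indeed even in your route the unit $\beta\leq e(\gamma(\beta))$ would suffice, so the appeal to HM3 there is redundant). The kernel-isomorphism part is treated by both of you as a repetition of the second half of Proposition \ref{prop:ReverseEngineering}; your sketch of it is accurate.
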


\begin{proof}
The part of the statement which concerns the verification of axioms K1', K2', K3-K6 is accounted for as in the proof of Proposition \ref{prop:ReverseEngineering}. Let us verify that $()^\star$ satisfies conditions MK2-MK5 of Definition \ref{def:MKa}.  Conditions MK2 and MK4  easily follow from the assumption that $e\dashv \iota$ (HM4). Condition MK3 follows from the surjectivity of $\iota$ and assumption HM5. As to MK5, it is enough to show that if $\alpha, \beta\in K$ such that $\beta\leq \alpha$ and $\gone \leq \beta$ and $\beta\cdot \beta\leq \beta$, then $\beta\leq e(\iota(\alpha))$. Since $\beta\leq \alpha$ by assumption and $e$ and $\iota$ are monotone, it is enough to show that $\beta\leq e(\iota(\beta))$. By adjunction, this is equivalent to $\gamma(\beta)\leq \iota(\beta)$, which holds by assumption HM6. This completes the proof of the first part of the statement. The proof of the second part is analogous to the corresponding part of the proof of Proposition \ref{prop:ReverseEngineering}, and is omitted.
\end{proof}

The following proposition immediately follows from Propositions \ref{prop:FromSingleToMulti meas} and \ref{prop:ReverseEngineering meas}:

\begin{proposition}
\label{prop:AplusPlus meas}
For any measurable Kleene algebra $\mathbb{K}$ and  heterogeneous measurable Kleene algebra $\mathbb{H}$,
\begin{center}
$ \mathbb{K} \cong (\mathbb{K}^+)_+ \quad \mbox{and}\quad \mathbb{H} \cong (\mathbb{H}_+)^+.$
\end{center}
\end{proposition}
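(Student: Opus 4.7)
The plan is to deduce this proposition essentially as a corollary of Propositions \ref{prop:FromSingleToMulti meas} and \ref{prop:ReverseEngineering meas}, which already guarantee that the constructions $(\cdot)^+$ and $(\cdot)_+$ send each class of structures into the other. What remains is to trace through the definitions and confirm that the round-trip compositions recover the original structure up to the natural isomorphism, and this mirrors the strategy of the (un-measurable) Proposition \ref{prop:AplusPlus}.

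For $\mathbb{K} \cong (\mathbb{K}^+)_+$, the identity on $K$ will be the isomorphism. By Definition \ref{def:Kplus meas}, the algebra $\bbA$ of $\mathbb{K}^+$ is the $\{()^\ast, ()^\star\}$-free reduct of $\mathbb{K}$, so the lattice-monoid-bound reduct is preserved on the nose. By Definition \ref{def:Hplus}, the Kleene star of $(\mathbb{K}^+)_+$ at $\alpha$ is $e(\gamma(\alpha))$, where $\gamma, e$ are the maps into which the original $()^\ast$ of $\mathbb{K}$ decomposes; hence $e(\gamma(\alpha)) = \alpha^\ast$ by construction, and analogously $e(\iota(\alpha)) = \alpha^\star$. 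So the two algebras coincide.

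For $\mathbb{H} \cong (\mathbb{H}_+)^+$, the first component $\bbA$ is literally shared, since $(\mathbb{H}_+)^+$ takes the $\{()^\ast, ()^\star\}$-free reduct of $\mathbb{H}_+$. On the special-type side, the kernel of $\mathbb{H}_+$ has carrier $\mathsf{Range}(e) \subseteq \bbA$, and the proof of Proposition \ref{prop:ReverseEngineering meas} already exhibits a join-semilattice isomorphism $i : \bbS \to \mathsf{Range}(e)$ with $i(\xi) = e(\xi)$. To lift $i$ to an isomorphism of heterogeneous measurable Kleene algebras, I would verify that if $e', \gamma', \iota'$ denote the adjoint decompositions of $\alpha \mapsto e(\gamma(\alpha))$ and $\alpha \mapsto e(\iota(\alpha))$ inside $\mathbb{H}_+$, then $e' \circ i = e$, $\gamma' = i \circ \gamma$ and $\iota' = i \circ \iota$. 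The first identity holds by definition ($e'$ is the inclusion $\mathsf{Range}(e) \hookrightarrow \bbA$), and the remaining two follow from uniqueness of left and right adjoints, using HM4 (which gives $\gamma \dashv e \dashv \iota$ and the retraction identities) together with the computation $e(\gamma(\alpha)) = \alpha^\ast$ and $e(\iota(\alpha)) = \alpha^\star$ inside $\mathbb{H}_+$.

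The main obstacle is the $\iota$-part of the second direction: one must be careful that the right adjoint $\iota'$ extracted from the interior operator $\alpha \mapsto e(\iota(\alpha))$ of $\mathbb{H}_+$ agrees with the original $\iota$ via $i$. The cleanest argument is again by uniqueness: both $\iota'$ and $i \circ \iota$ are right adjoints to the same embedding $e'$, so they must coincide. Assumption HM6, which was needed in Proposition \ref{prop:ReverseEngineering meas} to secure $()^\star$ as a genuine interior operator with the correct range, ensures precisely that this uniqueness argument applies in the measurable setting.
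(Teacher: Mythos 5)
Your proposal is correct and follows essentially the same route as the paper, which simply states that the proposition follows immediately from Propositions \ref{prop:FromSingleToMulti meas} and \ref{prop:ReverseEngineering meas} and leaves the round-trip verification implicit. The details you supply (the identity map witnessing $\mathbb{K} \cong (\mathbb{K}^+)_+$ since $()^\ast = e\gamma$ and $()^\star = e\iota$ by construction, and lifting the kernel isomorphism $i$ of Proposition \ref{prop:ReverseEngineering meas} to the maps $\gamma, \iota, e$ via uniqueness of adjoints) are exactly the routine checks the paper omits; the only slight imprecision is your last sentence, since HM6's real role is to make $\mathbb{H}_+$ measurable so that $\mathsf{Range}(\ast)=\mathsf{Range}(\star)=\mathsf{Range}(e)$ and a single embedding $e'$ carries both adjoints, while uniqueness of adjoints holds in any case.
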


\section{Multi-type  presentations for Kleene logics}\label{sec:Multi-typeLanguage}
In Section \ref{ssec:HeterogeneousKleene}, (continuous)  heterogeneous (measurable) Kleene algebras  have been introduced (cf.~Definitions  \ref{def:HeterogeneousKleeneAlgebras} and \ref{def:HeterogeneousKleeneAlgebras meas})  and shown to be  equivalent presentations of (continuous, measurable)  Kleene algebras. These constructions  motivate the  multi-type presentations of Kleene logics we introduce in the present section. Indeed, heterogeneous Kleene algebras are natural models for the following multi-type language $\mathcal{L}_{\mathrm{MT}}$, defined by simultaneous induction  from a  set $\mathsf{AtAct}$ of atomic actions (the elements of which are denoted by letters $a, b$):
\begin{align*}
\mathsf{Special}\ni \xi ::=&\, \gtdia \alpha   \\
\mathsf{General}\ni  \alpha ::= & \,a \mid 1 \mid 0 \mid \tgbox \xi  \mid \alpha \gor \alpha 
\end{align*}

while heterogeneous measurable Kleene algebras are natural models for the following multi-type language $\mathcal{L}_{\mathrm{MT}}$, defined by simultaneous induction  from $\mathsf{AtAct}$:
\begin{align*}
\mathsf{Special}\ni \xi ::=&\, \gtdia \alpha \mid \gtbox \alpha  \\
\mathsf{General}\ni  \alpha ::= & \,a \mid 1 \mid 0 \mid \tgbox \xi  \mid \alpha \gor \alpha 
\end{align*}
where, in any heterogeneous (measurable) Kleene algebra, the maps $\gamma$ and $e$ (and $\iota$) interpret the heterogeneous connectives  $\gtdia$, $\tgbox$ (and $\gtbox$) respectively.
The interpretation of $\mathcal{L}_{\mathrm{MT}}$-terms  into heterogeneous  algebras  is defined as the straightforward generalization of the interpretation of propositional languages in algebras of compatible signature, and is omitted.

The toggle between Kleene algebras  and heterogeneous Kleene algebras  is reflected syntactically by the following translation $(\cdot)^t: \mathcal{L}\to \mathcal{L}_{\mathrm{MT}}$ 
between the original language $\mathcal{L}$ of Kleene logic  and  the language $\mathcal{L}_{\mathrm{MT}}$ defined above:
\begin{center}
\begin{tabular}{r c l }
$a^t$ &$ = $& $a$ \\
$1^t$ &$ = $& $1$ \\
$0^t$ &$ = $& $0$ \\
$(\alpha\gor \beta)^t$ &$ = $& $\alpha^t \gor \beta^t$\\
$(\alpha\cdot \beta)^t$ &$ = $& $\alpha^t \cdot \beta^t$\\
$(\alpha^\ast)^t$ &$ = $&$\tgbox\gtdia \alpha^t$\\
$(\alpha^\star)^t$ &$ = $&$\tgbox\gtbox \alpha^t$\\
\end{tabular}
\end{center}
The following proposition is proved by a routine induction on  $\mathcal{L}$-formulas.
\begin{proposition}
\label{prop:ConsequencePreservedAndReflected}
For all $\mathcal{L}$-formulas $A$ and $B$ and every Kleene algebra $\mathbb{K}$,
\[\mathbb{K}\models \alpha\leq \beta \quad \mbox{ iff }\quad \mathbb{K}^+\models \alpha^t\leq \beta^t.\]
\end{proposition}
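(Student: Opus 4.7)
The plan is to prove, by induction on the complexity of $\mathcal{L}$-formulas, the stronger claim that for every assignment $v:\mathsf{Atprop}\to K$ (which is simultaneously an assignment into the $\mathsf{General}$ type of $\mathbb{K}^+$, since by Definition \ref{def:Kplus} the carrier $\bbA$ of $\mathbb{K}^+$ coincides with $K$ as a set), the interpretations coincide:
\[\llbracket \alpha\rrbracket^{\mathbb{K},v} \;=\; \llbracket \alpha^t\rrbracket^{\mathbb{K}^+,v}.\]
Once this identity is in hand, the biconditional of the proposition follows immediately, since both $\alpha\leq \beta$ in $\mathbb{K}$ and $\alpha^t\leq \beta^t$ in $\mathbb{K}^+$ are just universally quantified statements over $v$ comparing the same pairs of elements.

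First I would dispense with the base cases $\alpha\in\{a,\gone,\gbot\}$, which are immediate since $(\cdot)^t$ acts as the identity on atoms and constants and $\bbA$ is, by construction, the $()^\ast$-free reduct of $\mathbb{K}$ (in the measurable case, the $\{()^\ast,()^\star\}$-free reduct), so the constants $\gone,\gbot$ are interpreted by the same elements in both structures. Next, the inductive cases $\alpha = \beta \gor \gamma$ and $\alpha = \beta \gand \gamma$ follow directly from the inductive hypothesis together with the fact that the operations $\gor$ and $\gand$ of $\bbA$ are literally those of $\mathbb{K}$, so the clause defining the interpretation of a binary connective is the same on the two sides.

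The only substantive case is $\alpha = \beta^\ast$, for which $(\beta^\ast)^t = \tgbox\gtdia\,\beta^t$. By Definition \ref{def:Kplus} the connectives $\tgbox$ and $\gtdia$ are interpreted in $\mathbb{K}^+$ by the maps $e$ and $\gamma$ into which the closure operator $()^\ast$ decomposes, so $()^\ast = e\gamma$ on $K$ (cf.\ the discussion preceding Lemma \ref{lemma:Retraction}). Using the inductive hypothesis $\llbracket\beta\rrbracket^{\mathbb{K},v}=\llbracket\beta^t\rrbracket^{\mathbb{K}^+,v}$, we obtain
\[\llbracket\beta^\ast\rrbracket^{\mathbb{K},v} \;=\; \bigl(\llbracket\beta\rrbracket^{\mathbb{K},v}\bigr)^\ast \;=\; e\bigl(\gamma\bigl(\llbracket\beta^t\rrbracket^{\mathbb{K}^+,v}\bigr)\bigr) \;=\; \llbracket\tgbox\gtdia\,\beta^t\rrbracket^{\mathbb{K}^+,v}.\]
When the language is $\mathsf{MKL}$ and $\mathbb{K}$ is measurable, the symmetric case $\alpha=\beta^\star$, with $(\beta^\star)^t = \tgbox\gtbox\,\beta^t$, is handled in exactly the same way using the decomposition $()^\star = e\iota$ from the discussion preceding Lemma \ref{lemma:Retraction and coretraction} and the fact that $\gtbox$ is interpreted by $\iota$.

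I do not expect a genuine obstacle here: the whole point of the constructions of Section \ref{sec:SemanticEnvironment}, and specifically of Definitions \ref{def:Kplus} and \ref{def:Kplus meas}, is precisely to expose $()^\ast$ and $()^\star$ as the compositions $e\gamma$ and $e\iota$, so the ``hard'' inductive step is built into the definition of $\mathbb{K}^+$. The mild notational care to be exercised is ensuring that the assignment $v$ is read as valued in $\bbA$ rather than in $\bbS$ (the translation never introduces atomic $\mathsf{Special}$-type variables, so this is automatic), and that the interpretation of the composite modal prefix $\tgbox\gtdia$ (respectively $\tgbox\gtbox$) is unfolded in the correct order $e\circ\gamma$ (respectively $e\circ\iota$), matching the factorization of the closure (respectively interior) operator.
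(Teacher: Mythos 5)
Your proof is correct and is exactly the argument the paper intends: the paper dismisses this proposition as ``a routine induction on $\mathcal{L}$-formulas,'' and your induction, with the crucial step unfolding $(\beta^\ast)^t = \tgbox\gtdia\,\beta^t$ via the interpretation of $\gtdia$ as $\gamma$ and $\tgbox$ as $e$ so that $e\gamma = ()^\ast$ (and likewise $e\iota = ()^\star$ in the measurable case), supplies precisely the omitted details. No gap; this matches the paper's approach.
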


The general definition of {\em analytic inductive} inequalities can be instantiated to inequalities in the $\mathcal{L}_{\mathrm{MT}}$-signature according to the order-theoretic properties of the algebraic interpretation of the $\mathcal{L}_{\mathrm{MT}}$-connectives in heterogeneous (measurable) Kleene algebras. In particular, all connectives but $\mand_1$ and $\mand_2$ are normal.   Hence, we are now in a position to translate the axioms and rules describing the behaviour of $()^\ast$ and $()^\star$ from the single-type languages  into $\mathcal{L}_{\mathrm{MT}}$ using $(\cdot)^t$, and verify whether the resulting translations are analytic inductive.

\begin{center}
\begin{tabular}{r l}
$\gone \gor \alpha \leq \alpha^\ast\ \rightsquigarrow $ &
  $\begin{cases}
   \gone \gor \alpha^t \leq \tgbox\gtdia \alpha^t & (i)\\
  \tgbox\gtdia \alpha^t \leq \gone \gor \alpha^t& (ii)
  \end{cases}
  $
  \\
  \\
$\gone \gor \alpha^\ast = \alpha^\ast \ \rightsquigarrow$ &
$ \begin{cases}
    \gone \gor \tgbox\gtdia \alpha^t  \leq \tgbox\gtdia \alpha^t & (iii)\\
  \tgbox\gtdia \alpha^t \leq \gone \gor \tgbox\gtdia \alpha^t  & (iv)
  \end{cases}
  $
  \\
  \\
  $
\alpha \cdot \beta \leq \beta$ implies $\alpha^\ast \cdot \beta \leq \beta  \rightsquigarrow $&
  $\begin{cases}
\alpha^t \cdot \beta^t \leq \beta^t$ implies $\tgbox\gtdia \alpha^t \cdot \beta^t \leq \beta^t& (v)\\
  \end{cases}
  $
  \\
\\
$
\beta \cdot \alpha \leq \beta$ implies $\beta \cdot \alpha^\ast \leq \beta  \rightsquigarrow $&
  $\begin{cases}
\beta^t \cdot \alpha^t \leq \beta^t$ implies $\beta^t \cdot \tgbox\gtdia \alpha^t  \leq \beta^t& (vi)\\
  \end{cases}
  $

  \end{tabular}
  \end{center}
Notice that, relative to the order-theoretic properties of their interpretations on heterogeneous Kleene algebras, $\cdot$, $\gone$, $\gtdia$  are $\mathcal{F}$-connectives, while $\tgbox$ is a $\mathcal{G}$-connective. However, relative to the order-theoretic properties of their interpretations on heterogeneous measurable Kleene algebras, $\cdot$, $\gone$, $\gtdia$  are $\mathcal{F}$-connectives, while $\tgbox$ is both an $\mathcal{F}$-connective and a  $\mathcal{G}$-connective.
Hence, it is easy to see that, relative to the first interpretation, $(i)$ is the only analytic inductive inequality of the list above, due to the occurrences of the McKinsey-type nesting $\tgbox\gtdia \alpha^t$ in antecedent position. However, relative to the second interpretation, the same nesting becomes harmless, since the occurrences of $\tgbox$ in antecedent position are part of the Skeleton.

Likewise, it is very easy to see that the conditions HM1-HM6 in the definition of heterogeneous measurable Kleene algebras do not violate the conditions on nesting of analytic inductive inequalities. However, some of these conditions do not consist of inequalities taken in isolation but are given in the form of quasi-inequalities. When embedded into a quasi-inequality, the proof-theoretic treatment of an inequality such as $\beta\cdot\beta\leq \beta$ (which in isolation would be unproblematic) becomes problematic, since the translation of the quasi-inequality into a logically equivalent rule would not allow to `disentangle' the occurrences of $\beta$ in precedent position from the occurrences of $\beta$ in succedent position, thus making it impossible to translate the quasi-inequality directly as an analytic structural rule. This is why the calculus defined in the following section features an infinitary rule, introduced to circumvent this problem.   

\section{The proper multi-type display calculus D.MKL}
\label{sec:calculus}
\subsection{Language}
\label{sec:LanguageAndRules}

   In the present section, we define a {\em multi-type language} for the proper  multi-type display calculus for measurable Kleene logic. As usual, this language includes constructors for both logical (operational) and structural terms.
\begin{itemize}
\item Structural and operational terms:
\end{itemize}

\begin{center}
\begin{tabular}{ll}
$\mathsf{General}$&$ \left\{\begin{array}{l}
\alpha ::= \,a \mid \gone \mid \gbot \mid \tgbox\xi \mid \alpha \gor \alpha \mid \alpha \gand \alpha\\ 
 \\
 \Gamma ::= \gONE \mid \WCIRC\Pi\mid \Gamma \gAND \Gamma \mid \Gamma < \Gamma \mid \Gamma > \Gamma \\ 
\end{array} \right.$\\
\\

$\mathsf{Special}$& $\left\{\begin{array}{l}
\xi ::= \, \gtdia\alpha \mid \gtbox\alpha \\ 
 \\
\Pi ::= \BCIRC\Gamma
\end{array} \right.$
\end{tabular}
\end{center}

In what follows, we reserve $\alpha, \beta, \gamma$ (with or without subscripts) to denote $\mathsf{General}$-type operational terms, and $\xi, \chi, \pi$ (with or without subscripts) to denote formulas in $\mathsf{Special}$-type operational terms. Moreover, we reserve $\Gamma, \Delta, \Theta$ (with or without subscripts) to denote $\mathsf{General}$-type structural terms, and $\Pi, \Xi, \Lambda$ (with or without subscripts) to denote  $\mathsf{Special}$-type structural terms.

\begin{itemize}
\item Structural and operational terms:
\end{itemize}

\begin{center}
\begin{tabular}{|c|c|c|c|c|c|c|c|c|c|c|c|}
\hline
\mc{8}{|c|}{$\mathsf{General}$} &  \mc{2}{c|}{$\mathsf{S} \to \mathsf{G}$} & \mc{2}{c|}{$\mathsf{G} \to \mathsf{S}$}\\
\hline
\mc{2}{|c|}{$\gBOT$}& \mc{2}{c|}{$\gAND$} & \mc{2}{c|}{$<$}& \mc{2}{c|}{$>$} & \mc{2}{c|}{$\WCIRC$} & \mc{2}{c|}{$\BCIRC$}  \\
\hline
$\gone$&$\gbot$    & $\gand$  &$\phantom{\cdot}$ & $\phantom{(\slash)}$ & $(\slash)$ & $\phantom{(\backslash)}$ & $(\backslash)$ & $\tgbox$ & $\tgbox$ & $\gtdia$  &$\gtbox$ \\
\hline
\end{tabular}
\end{center}

Notice that, for the sake of minimizing the number of structural symbols,  we are assigning the same structural connective $\BCIRC$ to $\gtdia$  and $\gtbox$  although these modal operators are not dual to one another, but are respectively interpreted as the left adjoint and the right adjoint of $\tgbox$, which is hence both an $\mathcal{F}$-operator and a $\mathcal{G}$-operator, and  can therefore correspond to the structural connective $\WCIRC$ both in antecedent and in succedent position.

\subsection{Rules}
In the rules below, the symbols $\Gamma, \Delta$ and $\Theta$ denote structural variables of general type, and $\Sigma, \Pi$ and $\Xi$ structural variables of special type. The calculus D.MKL consists the following rules:
\begin{itemize}
\item Identity and cut rules:
\begin{center}
\begin{tabular}{rl}
\mc{2}{c}{
\AxiomC{\phantom{$X \fCenter A$}}
\LeftLabel{\scriptsize Id}
\UI $a \fCenter a$
\DisplayProof
}
\\
\\
\AX $\Gamma \fCenter \alpha$
\AX $\alpha \fCenter \Delta$
\RightLabel{\scriptsize $\mathsf{Cut}_g$}
\BI$\Gamma \fCenter \Delta$
\DisplayProof
&
\AX $\Pi \fCenter \xi$
\AX $\xi \fCenter \Xi$
\RightLabel{\scriptsize $\mathsf{Cut}_s$}
\BI$\Pi \fCenter \Xi$
\DisplayProof
\\
\end{tabular}
\end{center}

\item $\mathsf{General}$ type display rules:
\begin{center}
\begin{tabular}{rl}
\AX $\Gamma \gAND \Delta \fCenter \Theta$
\LeftLabel{\scriptsize $\mathsf{res}$}
\doubleLine
\UI$\Delta \fCenter \Gamma > \Theta$
\DisplayProof
&
\AX $\Gamma \gAND \Delta \fCenter \Theta $
\RightLabel{\scriptsize $\mathsf{res}$}
\doubleLine
\UI$\Gamma \fCenter \Theta < \Delta$
\DisplayProof
\end{tabular}
\end{center}

\item Multi-type display rules:
\begin{center}
\begin{tabular}{rl}
\AX $\Gamma \fCenter \WCIRC\Xi$
\LeftLabel{\scriptsize $\mathsf{adj}$}
\doubleLine
\UI$\BCIRC\Gamma \fCenter \Xi$
\DisplayProof
&
\AX $\WCIRC\Xi \fCenter \Gamma$
\RL{\scriptsize $\mathsf{adj}$}
\doubleLine
\UI$\Xi \fCenter \BCIRC\Gamma$
\DisplayProof
\end{tabular}
\end{center}

\item $\mathsf{General}$ type structural rules:

\begin{center}
\begin{tabular}{rl}
\AX $\Gamma \fCenter \Delta$
\LL{\scriptsize $\gONE_L$}
\doubleLine
\UIC{$\gONE \gAND \Gamma \fCenter \Delta$}
\DisplayProof
 &
\AX $\Gamma \fCenter \Delta$
\RL{\scriptsize $\gONE_R$}
\doubleLine
\UIC{$\Gamma \gAND \gONE \fCenter \Delta$}
\DisplayProof

\\
\\
\AXC{$(\Gamma_1 \gAND \Gamma_2) \gAND \Gamma_3 \fCenter \Delta$}
\LL{\fns assoc}
\doubleLine
\UIC{$\Gamma_1 \gAND (\Gamma_2 \gAND \Gamma_3) \fCenter \Delta$}
\DP
&
\AX$\Gamma \fCenter \gBOT$
\RightLabel{\scriptsize $\gBOT$-W}
\UIC{$\Gamma \fCenter \Delta$}
\DisplayProof
 \\
\end{tabular}
\end{center}

\item Multi-type structural rules:\footnote{Let $\Gamma^{(n)}$  be defined by setting $\Gamma^{(1)}: = \Gamma$ and $\Gamma^{(n+1)}: = \Gamma\odot \Gamma^{(n)}$.}

\begin{center}
\begin{tabular}{rl}
\AXC{}
\LL{\scriptsize one}
\UI$\gONE \fCenter \WCIRC\Pi$
\DisplayProof
&
\AX$\Gamma \fCenter \WCIRC\Pi$
\AX$\Delta \fCenter \WCIRC\Pi$
\RightLabel{\scriptsize abs}
\BI$\Gamma \odot \Delta \fCenter \WCIRC\Pi$
\DisplayProof
\\
\\
\AX$\Pi \fCenter \Sigma$
\LL{\scriptsize b-bal}
\UI$ \BCIRC\WCIRC \Pi \fCenter \BCIRC\WCIRC\Sigma$
\DisplayProof
 &
\AX$ \Pi \fCenter \Xi$
\RL{\scriptsize w-bal}
\doubleLine
\UI$\WCIRC\Pi \fCenter \WCIRC\Xi$
\DisplayProof
\\
\\

\AXC{$( \Gamma^{(n)}  \fCenter \Delta \,\mid \,n \geq 1)$}
\LL{\scriptsize $\omega$}
\UIC{$\WCIRC \BCIRC \Gamma  \fCenter \Delta $}
\DisplayProof

&
\AX$\WCIRC \Pi \gAND \WCIRC\Pi \fCenter \Delta$
\RL{\fns $\WCIRC$-C}
\UI$\WCIRC\Pi \fCenter \Delta$
\DP

 \\
\end{tabular}
\end{center}

\item  $\mathsf{General}$ type operational rules: in what follows, $i \in \{1, 2\}$,
\begin{center}
\begin{tabular}{rl}
\AX$\gONE \fCenter \Delta$
\LL{\scriptsize $\gone$}
\UI$\gone \fCenter \Delta$
\DP
&
\AXC{}
\RL{\scriptsize $\gone$}
\UI$\gONE \fCenter \gone$
\DP
\\
\\
\AXC{}
\LL{\scriptsize $\gbot$}
\UI$\gbot \fCenter \gBOT$
\DP
&
\AX$\Gamma \fCenter \gBOT$
\RL{\scriptsize $\gbot$}
\UI$\Gamma \fCenter \gbot$
\DP
\\
\\
\AX$\alpha_1 \fCenter \Delta$
\AX$\alpha_2 \fCenter \Delta$
\LL{\scriptsize $\gor$}
\BI$\alpha_1 \gor \alpha_2 \fCenter \Delta$
\DP
&
\AX$\Gamma \fCenter \alpha_{i}$
\RL{\scriptsize $\gor$}
\UI$\Gamma \fCenter \alpha_1 \gor \alpha_2$
\DP
\\
\\
\AX$\alpha \odot \beta \fCenter \Delta$
\LL{\scriptsize $\cdot$}
\UI$\alpha \cdot \beta \fCenter \Delta$
\DP
&
\AX$\Gamma \fCenter \alpha$
\AX$\Delta \fCenter \beta$
\RL{\scriptsize $\cdot$}
\BI$\Gamma \odot \Delta \fCenter\alpha \cdot \beta$
\DP

\end{tabular}
\end{center}

\item Multi-type operational rules:

\begin{center}
\begin{tabular}{rl}
\AX$\BCIRC\alpha \fCenter \Pi$
\LL{\scriptsize $\gtdia$}
\UI$\gtdia\alpha \fCenter \Pi$
\DisplayProof
&
\AX$\Gamma \fCenter \alpha$
\RL{\scriptsize $\gtdia$}
\UI$\BCIRC\Gamma \fCenter \gtdia\alpha$
\DisplayProof
\\

\\

\AX$\alpha \fCenter \Gamma$
\LL{\scriptsize $\gtbox$}
\UI$\gtbox\alpha \fCenter \BCIRC\Gamma$
\DisplayProof
&
\AX$\Pi \fCenter \BCIRC \alpha$
\RL{\scriptsize $\gtbox$}
\UI$\Pi \fCenter \gtbox \alpha$
\DisplayProof
\\

\\

\AX$\WCIRC \xi \fCenter \Gamma$
\LL{\scriptsize $\tgbox$}
\UI$\tgbox\xi  \fCenter \Gamma$
\DisplayProof
&
\AX$\Gamma \fCenter \WCIRC\xi$
\RL{\scriptsize $\tgbox$}
\UI$\Gamma \fCenter \tgbox\xi$
\DisplayProof
\end{tabular}
\end{center}
\end{itemize}

The following fact is proven by a straightforward induction on $\alpha$ and $\xi$. We omit the details.
\begin{proposition}
\label{prop:identity lemma}
For every $\alpha \in \textsf{General}$ and $\xi \in \textsf{Special}$, the sequents $\alpha \fCenter \alpha$ and $\xi\fCenter \xi$ are derivable in D.MKL.
\end{proposition}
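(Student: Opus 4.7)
The plan is a simultaneous induction on the complexity of the $\mathsf{General}$-type formula $\alpha$ and the $\mathsf{Special}$-type formula $\xi$. The base case, $\alpha = a$ for some atomic action $a$, is discharged directly by the Id rule. Every inductive case pairs the corresponding left and right operational rules for the outermost connective, possibly preceded by a structural move that reshapes the inductive hypothesis into the displayed form the introduction rules require.

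The pure $\mathsf{General}$-type cases are routine. If $\alpha = \alpha_1 \gor \alpha_2$, I would feed each inductive hypothesis $\alpha_i \vdash \alpha_i$ into right $\gor$ to obtain $\alpha_i \vdash \alpha_1 \gor \alpha_2$, then conclude with left $\gor$. If $\alpha = \alpha_1 \gand \alpha_2$, the two inductive hypotheses combine via right $\cdot$ into $\alpha_1 \gAND \alpha_2 \vdash \alpha_1 \gand \alpha_2$, and left $\cdot$ closes the derivation. The constants $\gone$ and $\gbot$ are handled analogously with their constant rules, noting that one direction of each comes ``for free'' from the zero-premise rule.

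The heterogeneous cases are only slightly more delicate. For $\xi = \gtdia \alpha$, apply right $\gtdia$ to the inductive hypothesis $\alpha \vdash \alpha$ to obtain $\BCIRC \alpha \vdash \gtdia \alpha$, then left $\gtdia$ to conclude $\gtdia \alpha \vdash \gtdia \alpha$. The case $\xi = \gtbox \alpha$ is dual: left $\gtbox$ yields $\gtbox \alpha \vdash \BCIRC \alpha$, and right $\gtbox$ concludes. For $\alpha = \tgbox \xi$, I would first lift the inductive hypothesis $\xi \vdash \xi$ through the w-bal rule to $\WCIRC \xi \vdash \WCIRC \xi$, then use right $\tgbox$ to get $\WCIRC \xi \vdash \tgbox \xi$, and finish with left $\tgbox$.

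I do not anticipate any real obstacle: the operational rules of \textsf{D.MKL} have been designed so that each connective's introductions mesh when fed an identity on immediate subformulas. The only point worth flagging is the $\tgbox$ case, where w-bal plays the role of the bridge between the operational identity on $\xi$ and the displayed premise required by right $\tgbox$. It is also worth observing that the infinitary $\omega$ rule plays no role here, since $\ast$ and $\star$ are not connectives of the multi-type language but appear only through the nested modalities $\tgbox\gtdia$ and $\tgbox\gtbox$, which are handled by the cases above.
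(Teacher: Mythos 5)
Your proof is correct and is exactly the ``straightforward induction on $\alpha$ and $\xi$'' that the paper invokes (and omits): each case pairs the left and right operational rules, with w-bal bridging the $\tgbox\xi$ case, which matches the identity sub-derivations the paper itself uses in its completeness proofs (e.g.\ $\gtdia\alpha \fCenter \gtdia\alpha$ and $\tgbox\gtbox\alpha \fCenter \tgbox\gtbox\alpha$). No gaps; your remark that the $\omega$-rule is not needed here is also accurate.
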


\section{Properties}\label{sec:properties}
\subsection{Soundness}
In the present subsection, we outline the  verification of the soundness of the rules of $\mathrm{D.MKL}$ w.r.t.~heterogenous measurable Kleene algebras  (cf.~Definition \ref{def:HeterogeneousKleeneAlgebras meas}). The first step consists in interpreting structural symbols as logical symbols according to their (precedent or succedent) position, as indicated in the synoptic table of Section \ref{ssec:Display calculus}. This makes it possible to interpret sequents as inequalities, and rules as quasi-inequalities. For example, (modulo standard manipulations) the rules on the left-hand side below correspond to the (quasi-)inequalities on the right-hand side:

\begin{center}
\begin{tabular}{rcl}
\AXC{}
\UI$\Phi  \fCenter \WCIRC\Pi$
\DisplayProof
&$\quad\rightsquigarrow\quad$&
$\forall\xi[1 \leq \tgbox\xi]$
\\
\\
\AX$\Gamma \fCenter \WCIRC\Pi$
\AX$\Delta \fCenter \WCIRC\Pi$
\RightLabel{\scriptsize abs}
\BI$\Gamma \odot \Delta \fCenter \WCIRC\Pi$
\DisplayProof
&$\quad\rightsquigarrow\quad$& $\forall\alpha\forall\beta[\gtdia(\alpha\cdot\beta) \leq \gtdia\alpha\sqcup \gtdia\beta]$

\\
\\
\AX$\Pi \fCenter \Sigma$
\LL{\scriptsize b-bal}
\UI$ \BCIRC\WCIRC \Pi \fCenter \BCIRC\WCIRC\Sigma$
\DisplayProof
&$\quad\rightsquigarrow\quad$& $\forall\xi[\gtdia\tgbox\xi \leq \gtbox\tgbox\xi]$
\\
\\
\AX$ \Pi \fCenter \Xi$
\RL{\scriptsize w-bal}
\doubleLine
\UI$\WCIRC\Pi \fCenter \WCIRC\Xi$
\DisplayProof
&$\quad\rightsquigarrow\quad$& $\forall\xi\forall \pi[\pi\leq \xi \Leftrightarrow \tgbox\pi \leq \tgbox\xi]$
\\
\\

\AXC{$( \Gamma^{(n)}  \fCenter \Delta \,\mid \,n \geq 1)$}
\LL{\scriptsize $\omega$}
\UIC{$\WCIRC \BCIRC \Gamma  \fCenter \Delta $}
\DisplayProof
&$\quad\rightsquigarrow\quad$& $\forall\alpha[\tgbox\gtdia\alpha \leq \bigcup_{n\in \omega}\alpha^n]$
\\
\\
\AX$\WCIRC \Pi \gAND \WCIRC\Pi \fCenter \Delta$
\RL{\fns $\WCIRC$-C}
\UI$\WCIRC\Pi \fCenter \Delta$
\DP
&$\quad\rightsquigarrow\quad$& $\forall\xi[\tgbox\xi \leq \tgbox\xi\cdot \tgbox\xi]$

\\
\end{tabular}
\end{center}
Then, the verification of the soundness of the rules of $\mathrm{D.MKL}$ boils down to checking the validity of their corresponding quasi-inequalities in heterogenous measurable Kleene algebras. This verification  is routine and is omitted.
\subsection{Completeness}
In the present section, we show that the translations --  by means of the map $()^t$ defined in Section \ref{sec:Multi-typeLanguage} -- of the axioms and rules of  $\mathrm{S.MKL}$ (cf.~Section \ref{ssec:kleene logics}) are derivable in the calculus $\mathrm{D.MKL}$. For the reader's convenience, here below we report the recursive definition of $()^t$:
\begin{center}
\begin{tabular}{rclcrcl}
$a^t$   & $::=$ & $a$\\
$\gone^t$ & $::=$ & $\gone$ \\
$\gbot^t$ & $::=$ & $\gbot$\\
$(\alpha \gand \beta)^t$ & $::= $ & $\alpha^t\gand \beta^t$\\
$(\alpha \gor \beta)^t$ & $::=$ & $ \alpha^t \gor \beta^t$ \\
$(\alpha^{\ast})^t$ & $::=$ & $\tgbox\gtdia\alpha^t$\\
$(\alpha^{\star})^t$ & $::=$ & $\tgbox\gtbox\alpha^t$ \\
\end{tabular}
\end{center}

\begin{proposition}
For every $\alpha \in \mathrm{S.KL}$, the sequent $\alpha^t \fCenter \alpha^t$ is derivable in D.MKL.
\end{proposition}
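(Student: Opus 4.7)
The plan is to observe that the statement is an essentially immediate corollary of Proposition \ref{prop:identity lemma}: one needs only verify that $\alpha^t$, for any $\alpha\in \mathrm{S.KL}$, always has type $\textsf{General}$ in the multi-type language $\mathcal{L}_{\mathrm{MT}}$, and then invoke the identity lemma for $\textsf{General}$-type formulas.

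First I would carry out a straightforward induction on the structure of $\alpha$ to confirm the typing claim. The atomic and constant clauses $a^t=a$, $\gone^t=\gone$, $\gbot^t=\gbot$ directly produce $\textsf{General}$-type formulas. The clauses for $\gor$ and $\gand$ preserve $\textsf{General}$ type since both connectives live in the general type. For the two Kleene-star clauses $(\alpha^\ast)^t=\tgbox\gtdia\alpha^t$ and $(\alpha^\star)^t=\tgbox\gtbox\alpha^t$, the inductive hypothesis gives $\alpha^t$ of type $\textsf{General}$; since $\gtdia$ and $\gtbox$ send $\textsf{General}$ to $\textsf{Special}$, and $\tgbox$ sends $\textsf{Special}$ back to $\textsf{General}$, the outcome is $\textsf{General}$ as required. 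Having established the typing, Proposition \ref{prop:identity lemma} directly yields $\alpha^t \fCenter \alpha^t$ in $\mathrm{D.MKL}$.

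If one wished to unfold the derivation explicitly for the star cases, which are the only ones that are not completely trivial, the recipe from the proof of Proposition \ref{prop:identity lemma} specializes as follows for $(\alpha^\ast)^t=\tgbox\gtdia\alpha^t$: start from the inductively given identity $\alpha^t \fCenter \alpha^t$; apply the right $\gtdia$ rule and then the left $\gtdia$ rule to obtain the $\textsf{Special}$-type identity $\gtdia\alpha^t \fCenter \gtdia\alpha^t$; apply the $\WCIRC$-bal rule (double line, read downwards) to pass to $\WCIRC\gtdia\alpha^t \fCenter \WCIRC\gtdia\alpha^t$; apply the left $\tgbox$ rule to obtain $\tgbox\gtdia\alpha^t \fCenter \WCIRC\gtdia\alpha^t$; and finally close with the right $\tgbox$ rule. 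The case $(\alpha^\star)^t=\tgbox\gtbox\alpha^t$ is entirely analogous, using the $\gtbox$ operational rules in place of the $\gtdia$ ones.

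There is no real obstacle here: the statement is just an identity lemma for the image of the translation, and its proof requires no new ingredient beyond the already established induction-on-formulas argument of Proposition \ref{prop:identity lemma}.
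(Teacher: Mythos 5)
Your proposal is correct and matches the paper's (omitted) argument: the statement is exactly the observation that every translated formula $\alpha^t$ is a $\textsf{General}$-type operational term of $\mathrm{D.MKL}$, so the induction behind Proposition~\ref{prop:identity lemma} applies, and your explicit unfoldings for $\tgbox\gtdia\alpha^t$ and $\tgbox\gtbox\alpha^t$ (via the $\gtdia$/$\gtbox$ operational rules, w-bal, and the $\tgbox$ rules) are precisely the derivation patterns the paper itself uses in its completeness derivations.
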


Let $\alpha^{(n)}$ be defined by setting $\alpha^{(1)}: = \alpha$ and $\alpha^{(n+1)}: = \alpha\odot \alpha^{(n)}$.

\begin{lemma}[Omega]
\label{lemma:OmegaLemma}
If $\alpha \odot \beta \fCenter \beta$ (resp.~$\beta \odot \alpha \fCenter \beta$) is derivable, then $\alpha^{(n)} \odot \beta \fCenter \beta$ (resp.~$\beta \odot \alpha^{(n)} \fCenter \beta$) is derivable for every $n\geq 0$.
\end{lemma}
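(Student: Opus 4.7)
The plan is to prove each of the two implications by a straightforward induction on $n \geq 1$, using only the display postulate $\mathsf{res}$, the associativity structural rule, and the cut rule $\mathsf{Cut}_g$. The two statements are entirely symmetric (essentially swapping the roles of the two residuals $<$ and $>$), so I detail only the first; the ``resp.'' version is obtained mutatis mutandis by displaying with $<$ instead of $>$ and rebracketing associativity the other way.

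The base case $n = 1$ is immediate: $\alpha^{(1)} \odot \beta = \alpha \odot \beta$ by definition, so the conclusion coincides with the hypothesis.

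For the inductive step, assume both the main hypothesis $\alpha \odot \beta \fCenter \beta$ and the inductive hypothesis $\alpha^{(n)} \odot \beta \fCenter \beta$. First I would apply $\mathsf{res}$ to the main hypothesis to display $\beta$ on the right, obtaining $\beta \fCenter \alpha > \beta$. Then $\mathsf{Cut}_g$ on the cut formula $\beta$ with the inductive hypothesis yields $\alpha^{(n)} \odot \beta \fCenter \alpha > \beta$. Undisplaying by $\mathsf{res}$ produces $\alpha \odot (\alpha^{(n)} \odot \beta) \fCenter \beta$, and a single application of the associativity rule rebrackets the antecedent as $(\alpha \odot \alpha^{(n)}) \odot \beta \fCenter \beta$, which by the defining recursion $\alpha^{(n+1)} := \alpha \odot \alpha^{(n)}$ is exactly $\alpha^{(n+1)} \odot \beta \fCenter \beta$, as required.

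I do not anticipate any genuine obstacle; the argument is a short, purely syntactic manipulation of display-rule applications combined with a cut on the operational formula $\beta$. The only point worth flagging is that cut is applied on an operational (not merely structural) formula, which is precisely the shape $\mathsf{Cut}_g$ requires, so the step is legitimate as stated.
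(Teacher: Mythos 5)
Your proof is correct and has the same overall skeleton as the paper's: induction on $n$, with the inductive step obtained from the hypothesis, the inductive hypothesis, one application of associativity and one cut, and the full claim obtained by concatenating these steps. The implementations of the inductive step differ, though: the paper first converts the hypothesis into operational form ($\alpha \gand \beta \fCenter \beta$, via the left introduction rule for $\gand$), builds $\alpha^{(n+1)} \odot \beta \fCenter \alpha \gand \beta$ from the identity $\alpha \fCenter \alpha$ and the inductive hypothesis using the right introduction rule for $\gand$ plus associativity, and then cuts on the compound formula $\alpha \gand \beta$; you stay entirely at the structural level, residuating the hypothesis to $\beta \fCenter \alpha > \beta$ and cutting on $\beta$ itself. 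Your route avoids any appeal to the operational rules for $\gand$ and to the derivability of $\alpha \fCenter \alpha$ (Proposition \ref{prop:identity lemma}), so it is marginally leaner; both cuts are on operational formulas and hence legitimate. One small caution about your ``mutatis mutandis'' remark for the second half: since $\alpha^{(n+1)}$ is defined as $\alpha \odot \alpha^{(n)}$ rather than $\alpha^{(n)} \odot \alpha$, the symmetric argument comes out cleanest if you residuate the \emph{inductive hypothesis} (obtaining $\beta \fCenter \beta < \alpha^{(n)}$) and cut it against the hypothesis $\beta \odot \alpha \fCenter \beta$, so that $\mathsf{res}$ and associativity yield $\beta \odot (\alpha \odot \alpha^{(n)}) \fCenter \beta$ directly; residuating the hypothesis instead produces $\beta \odot (\alpha^{(n)} \odot \alpha) \fCenter \beta$ and needs an extra (displayed) rebracketing. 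The paper omits this case with the same hand-wave, so this is a presentational point, not a gap.
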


\begin{proof}
Let us show that for any $n\geq 1$, if $\alpha^{(n)} \odot \beta \fCenter \beta$ is derivable, then $\alpha^{(n+1)} \odot \beta \fCenter \beta$ is derivable (the proof that $\beta \odot \alpha^{(n)} \fCenter \beta$ is derivable from $\beta \odot \alpha^{(n)} \fCenter \beta$ is analogous and it is omitted). Indeed:

\begin{center}
\AX$\alpha \fCenter \alpha$
\AXC{\fns hyp}
\noLine
\UI$\alpha^{(n)} \odot \beta \fCenter \beta$
\BI$\alpha \odot (\alpha^{(n)} \odot \beta) \fCenter \alpha \cdot \beta$
\UI$(\alpha \odot \alpha^{(n)}) \odot \beta \fCenter \alpha \cdot \beta$
\UI$\alpha^{(n+1)} \odot \beta \fCenter \alpha \cdot \beta$
\AXC{\fns assump}
\noLine
\UI$\alpha \cdot \beta \fCenter \beta$
\RightLabel{\fns cut}
\BI$\alpha^{(n+1)} \odot \beta \fCenter \beta$
\DisplayProof
\end{center}

Hence, the sequent $\alpha^{(n)} \odot \beta \fCenter \beta$ for any $n$ is obtained from a proof of $\alpha \odot \beta \fCenter \beta$ by concatenating $n$ derivations of the shape shown above.
\end{proof}

As to the rule K4  (cf.~Definition \ref{def:KL}), if $\alpha \cdot \beta \fCenter \beta$ is derivable in D.MKL, then $\alpha \odot \beta \fCenter \beta$ is derivable in D.MKL\footnote{This is due to the fact that $\cdot$ is a normal $\mathcal{F}$-operator, and in proper display calculi the left introduction rules of $\mathcal{F}$-operators are invertible.}, hence by Lemma \ref{lemma:OmegaLemma} so are the sequents $\alpha^{(n)} \odot \beta \fCenter \beta $  for any $n\geq 1$.  By applying the appropriate display postulate to each such sequent, we obtain  derivations of $\alpha^{(n)}  \fCenter \beta <\beta $ for any $n\geq 1$.  Hence:

\begin{center}
\AX$(\alpha^{(n)} \fCenter \beta < \beta \,\mid \,1 \leq n)$
\LeftLabel{$\omega$}
\UI$\WCIRC\BCIRC \alpha  \fCenter \beta < \beta$
\UI$\BCIRC \alpha  \fCenter \BCIRC (\beta < \beta)$
\UI$\gtdia \alpha  \fCenter \BCIRC (\beta < \beta)$
\UI$\WCIRC\gtdia \alpha  \fCenter \beta < \beta$
\UI$\tgbox \gtdia \alpha \fCenter \beta < \beta$
\UI$\tgbox \gtdia \alpha \gAND \beta  \fCenter \beta$
\UI$\tgbox \gtdia \alpha \gand \beta  \fCenter \beta$
\DisplayProof
\end{center}

The proof that the rule K5 is derivable is analogous and we omit it. As to the axioms of Definition \ref{def:KL} in which $()^\ast$-terms occur,

\begin{center}
\begin{tabular}{cc}

\AXC{\ }
\LL{\fns one}
\UI$\Phi \fCenter \WCIRC \gtdia \alpha$
\UI$\Phi \fCenter \tgbox \gtdia \alpha$
\UI$1 \fCenter \tgbox \gtdia \alpha$

\AX$\alpha \fCenter \alpha$
\UI$\BCIRC \alpha \fCenter \gtdia \alpha$
\UI$\alpha \fCenter \WCIRC \gtdia \alpha$

\AX$\alpha \fCenter \alpha$
\UI$\BCIRC \alpha \fCenter \gtdia \alpha$
\UI$\gtdia \alpha \fCenter \gtdia \alpha$
\RL{\scriptsize w-bal}
\UI$\WCIRC\gtdia \alpha \fCenter \WCIRC\gtdia \alpha$
\UI$\tgbox\gtdia \alpha \fCenter \WCIRC\gtdia \alpha$

\RL{\fns abs}
\BI$\alpha \odot \tgbox\gtdia \alpha  \fCenter \WCIRC \gtdia \alpha$
\UI$\alpha \odot \tgbox\gtdia \alpha \fCenter \tgbox \gtdia \alpha$
\UI$\alpha \cdot \tgbox\gtdia \alpha \fCenter \tgbox \gtdia \alpha$

\BI$1 \gor \alpha \cdot \tgbox\gtdia \alpha \fCenter \tgbox \gtdia \alpha$
\DisplayProof
 \\
\end{tabular}
\end{center}

\begin{center}
\begin{tabular}{c}
\AXC{\ }
\LL{\fns one}
\UI$\Phi \fCenter \WCIRC \gtdia \alpha$
\UI$\Phi \fCenter \tgbox \gtdia \alpha$
\UI$\gone \fCenter \tgbox \gtdia \alpha$

\AX$\alpha \fCenter \alpha$
\UI$\BCIRC \alpha \fCenter \gtdia \alpha$
\UI$\gtdia \alpha \fCenter \gtdia \alpha$
\RL{\scriptsize w-bal}
\UI$\WCIRC\gtdia \alpha \fCenter \WCIRC\gtdia \alpha$
\UI$\tgbox\gtdia \alpha \fCenter \WCIRC\gtdia \alpha$

\AX$\alpha \fCenter \alpha$
\UI$\BCIRC \alpha \fCenter \gtdia \alpha$
\UI$\gtdia \alpha \fCenter \gtdia \alpha$
\RL{\scriptsize w-bal}
\UI$\WCIRC\gtdia \alpha \fCenter \WCIRC\gtdia \alpha$
\UI$\tgbox\gtdia \alpha \fCenter \WCIRC\gtdia \alpha$

\RL{\fns abs}
\BI$\tgbox\gtdia \alpha \odot \tgbox\gtdia \alpha\fCenter \WCIRC \gtdia \alpha$
\UI$\tgbox\gtdia \alpha \odot \tgbox\gtdia \alpha \fCenter \tgbox \gtdia \alpha$
\UI$\tgbox\gtdia \alpha \cdot \tgbox\gtdia \alpha\fCenter \tgbox \gtdia \alpha$
\BI$\gone \gor \tgbox\gtdia \alpha \cdot \tgbox\gtdia \alpha\fCenter \tgbox \gtdia \alpha$
\DisplayProof
 \\
\end{tabular}
\end{center}

The translations of $0\cdot \alpha\dashv\vdash 0$  are derivable as follows:

\begin{center}
\begin{tabular}{cc}
\AX$0 \fCenter \Phi$
\RL{\fns $\Phi$-W}
\UI$0 \fCenter \Phi < \alpha$
\UI$0 \odot \alpha \fCenter \Phi$
\UI$0 \cdot \alpha \fCenter \Phi$
\UI$0 \cdot \alpha \fCenter 0$
\DisplayProof
 &
\AX$0 \fCenter \Phi$
\RL{\fns $\Phi$-W}
\UI$0 \fCenter \alpha \cdot 0$
\DisplayProof
\end{tabular}
\end{center}

The translation of  $0^* \fCenter 1$  is derivable as follows:

\begin{center}
\AXC{\ }
\LL{\fns one}
\UI$\Phi \fCenter \WCIRC\BCIRC \gone$
\LL{\fns $\Phi$}
\UI$\Phi \gAND \gbot \fCenter\WCIRC\BCIRC \gone$
\LL{\fns $\Phi$}
\UI$\gbot \fCenter \WCIRC\BCIRC \gone$
\UI$\BCIRC \gbot \fCenter \BCIRC \gone$
\UI$\gtdia\gbot \fCenter \BCIRC \gone$
\UI$\WCIRC\gtdia\gbot \fCenter \gone$
\UI$\tgbox\gtdia\gbot \fCenter \gone$
\DisplayProof
\end{center}

The translation of $1 \fCenter 0^\ast$ is derivable as follows:

\begin{center}
\AXC{\ }
\LL{\fns one}
\UI$\Phi \fCenter \WCIRC \gtdia 0$
\UI$\Phi \fCenter \tgbox \gtdia 0$
\UI$1 \fCenter \tgbox \gtdia 0$
\DisplayProof
\end{center}

The translation of $1^\ast \fCenter 1$ is derivable applying the rule $\WCIRC \BCIRC \Phi$ (that is derivable using the $\omega$-rule):

\begin{center}
\AX$\Phi \fCenter 1$
\LL{\fns $\WCIRC \BCIRC \Phi$}
\UI$\WCIRC \BCIRC \Phi \fCenter 1$
\UI$\BCIRC \Phi \fCenter \BCIRC 1$
\UI$\Phi \fCenter \WCIRC \BCIRC 1$
\UI$1 \fCenter \WCIRC \BCIRC 1$
\UI$\BCIRC 1 \fCenter \BCIRC 1$
\UI$\gtdia 1 \fCenter \BCIRC 1$
\UI$\WCIRC \gtdia 1 \fCenter 1$
\UI$\tgbox \gtdia 1 \fCenter 1$
\DisplayProof
\end{center}

The derivations of the translations of the remaining axioms are standard and are omitted.
Below, we derive the translations of the axioms  of Definition \ref{def:DualStar}.

\begin{center}
\begin{tabular}{cc}
\AXC{\ }
\LL{\fns one}
\UI$\Phi \fCenter \WCIRC \gtbox \alpha$
\UI$\Phi \fCenter \tgbox \gtbox \alpha$
\UI$1 \fCenter \tgbox \gtbox \alpha$
\DisplayProof
 &
\AX$\alpha \fCenter \alpha$
\UI$\gtbox \alpha \fCenter \BCIRC \alpha$
\UI$\gtbox \alpha \fCenter \gtbox \alpha$
\RL{\scriptsize w-bal}
\UI$\WCIRC\gtbox \alpha \fCenter \WCIRC\gtbox \alpha$
\UI$\tgbox\gtbox \alpha \fCenter \WCIRC\gtbox \alpha$

\AX$\alpha \fCenter \alpha$
\UI$\gtbox \alpha \fCenter \BCIRC \alpha$
\UI$\gtbox \alpha \fCenter \gtbox \alpha$
\RL{\scriptsize w-bal}
\UI$\WCIRC\gtbox \alpha \fCenter \WCIRC\gtbox \alpha$
\UI$\tgbox\gtbox \alpha \fCenter \WCIRC\gtbox \alpha$

\RL{\fns abs}
\BI$\tgbox\gtbox \alpha \odot \tgbox\gtbox \alpha\fCenter \WCIRC \gtbox \alpha$
\UI$\tgbox\gtbox \alpha \odot \tgbox\gtbox \alpha\fCenter \tgbox \gtbox \alpha$
\DisplayProof
\end{tabular}
\end{center}

\begin{center}
\begin{tabular}{ccc}
\AX$\alpha \fCenter \alpha$
\UI$\gtbox \alpha \fCenter \BCIRC \alpha$
\UI$\WCIRC \gtbox \alpha \fCenter \alpha$
\UI$\tgbox \gtbox \alpha \fCenter \alpha$
\DisplayProof
 &
\AX$\alpha \fCenter \alpha$
\UI$\gtbox \alpha \fCenter \BCIRC \alpha$
\UI$\gtbox \alpha \fCenter \gtbox \alpha$
\LL{\fns b-bal}
\UI$\BCIRC \WCIRC \gtbox \alpha \fCenter \BCIRC \WCIRC \gtbox \alpha$
\UI$\WCIRC \BCIRC \WCIRC \gtbox \alpha \fCenter \WCIRC \gtbox \alpha$
\UI$\WCIRC \BCIRC \WCIRC \gtbox \alpha \fCenter \tgbox \gtbox \alpha$
\UI$\BCIRC \WCIRC \gtbox \alpha \fCenter \BCIRC \tgbox \gtbox \alpha$
\UI$\BCIRC \WCIRC \gtbox \alpha \fCenter \gtbox \tgbox \gtbox \alpha$
\UI$\WCIRC \gtbox \alpha \fCenter \WCIRC \gtbox \tgbox \gtbox \alpha$
\UI$\WCIRC \gtbox \alpha \fCenter \tgbox \gtbox \tgbox \gtbox \alpha$
\UI$\tgbox \gtbox \alpha \fCenter \tgbox \gtbox \tgbox \gtbox \alpha$
\DisplayProof
 &
\AX$\alpha \fCenter \alpha$
\UI$\gtbox \alpha \fCenter \BCIRC \alpha$
\UI$\gtbox \alpha \fCenter \gtbox \alpha$
\RL{\fns w-bal}
\UI$\WCIRC \gtbox \alpha \fCenter \WCIRC \gtbox \alpha$
\UI$\tgbox \gtbox \alpha \fCenter \WCIRC \gtbox \alpha$
\UI$\tgbox \gtbox \alpha \fCenter \tgbox \gtbox \alpha$
\DisplayProof
\end{tabular}
\end{center}

Finally, let us derive the translation of the ternary rule of Definition \ref{def:DualStar}. Assume that the translations of  $\beta \fCenter \alpha$, and
 $\gone \fCenter \beta$ and
$\beta\cdot \beta \fCenter \beta$ are derivable. Hence, by the invertibility of the introduction rules of $\mathcal{F}$-connectives in proper display calculi, $\Phi \fCenter \beta$ and
$\beta\odot \beta \fCenter \beta$ are derivable.   By Lemma \ref{lemma:OmegaLemma},  $\beta^{(n)} \odot \beta \fCenter \beta$ is derivable. Therefore, we can derive the following sequents for any $n\geq 1$:

\begin{center}
\AX$(\beta^{(n)} \fCenter \beta < \beta^{(n)} \,\mid \,1 \leq n)$
\LeftLabel{$\omega$}
\UI$\WCIRC\BCIRC \beta \fCenter \beta < \beta^{(n)}$
\UI$\WCIRC\BCIRC \beta \gAND \beta^{(n)} \fCenter \beta$
\UI$\beta^{(n)} \fCenter \WCIRC\BCIRC \beta > \beta$
\DisplayProof
\end{center}
Hence:

\begin{center}
\AX$(\beta^{(n)} \fCenter \WCIRC\BCIRC \beta > \beta \,\mid \,1 \leq n)$
\LeftLabel{$\omega$}
\UI$\WCIRC\BCIRC \beta \fCenter \WCIRC\BCIRC \beta > \beta$
\UI$\WCIRC\BCIRC \beta \gAND \WCIRC\BCIRC \beta \fCenter \beta$
\LeftLabel{\fns $\WCIRC$-C}
\UI$\WCIRC\BCIRC \beta \fCenter \beta$

\AX$\beta \fCenter \alpha$

\RL{\fns Cut}
\BI$\WCIRC\BCIRC \beta \fCenter \alpha$
\UI$\BCIRC \beta \fCenter \BCIRC \alpha$
\UI$\BCIRC \beta \fCenter \gtbox \alpha$
\UI$\beta \fCenter \WCIRC \gtbox \alpha$
\UI$\beta \fCenter \tgbox \gtbox \alpha$
\DisplayProof
\end{center}

\subsection{Conservativity}
For any heterogeneous measurable Kleene algebra $\mathbb{H} = (\bbA, \bbS, \gamma, \iota, e)$, the algebra $\bbA$ is a  complete join-semilattice, and $\cdot$ distributes over arbitrary joins in each coordinate. This implies that the right residuals exist of $\cdot$ in each coordinate, which we denote $\slash$ and $\backslash$:
$$\alpha \backslash \beta := \gOR\{\alpha': \alpha \cdot \alpha' \leq \beta \}, \quad \beta \slash \alpha := \gOR\{\alpha':  \alpha' \cdot \alpha  \leq \beta \}.$$
From here on, the proof of conservativity proceeds in the usual way as detailed in \cite{GMPTZ}.

\subsection{Cut elimination and subformula property}
The cut elimination of D.MKL follows from  the Belnap-style meta-theorem proven in \cite{Trends}, of which a restriction to  proper multi-type display calculi is stated in \cite{GP:linear}. The proof boils down to verifying the conditions $C_1$-$C_{10}$ of \cite[Section 6.4]{GP:linear}. Most of these conditions are easily verified by inspection on rules; the most interesting one is condition $\textrm{C}'_8$, concerning the principal stage in the cut elimination, on which we expand in the lemma below.
\begin{lemma}\label{cut:C8}
$\mathrm{D.MKL}$ satisfies $\textrm{C}'_8$.
\end{lemma}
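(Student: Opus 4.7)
The plan is to verify condition $\textrm{C}'_8$ by case analysis on the main connective of the principal cut formula: for each operational connective $\star$ of $\mathrm{D.MKL}$, I must exhibit a local reduction transforming a derivation that ends with a principal cut on a $\star$-formula into a derivation whose remaining cuts are on strictly smaller formulas. First I would dispense with the $\mathsf{General}$-type propositional connectives $\gone$, $\gbot$, $\gor$ and $\gand$: in each case the reduction combines the operational left/right introduction rules with the display postulate $\mathsf{res}$ to propagate the cut onto the immediate subformulas. These verifications follow the standard pattern for proper display calculi and are routine.

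Next I would treat the multi-type modal connectives $\gtdia$ and $\gtbox$, whose reductions rely on the two $\mathsf{adj}$ display postulates encoding the adjunctions $\gtdia\dashv\tgbox$ and $\tgbox\dashv\gtbox$. For a principal cut on $\gtdia\alpha$ with premises $\Gamma\vdash \alpha$ (feeding the right rule) and $\BCIRC\alpha\vdash \Pi$ (feeding the left rule), the plan is to display $\BCIRC\alpha\vdash\Pi$ to $\alpha\vdash \WCIRC\Pi$ via the first $\mathsf{adj}$, cut on $\alpha$ with $\Gamma\vdash \alpha$ to obtain $\Gamma\vdash \WCIRC\Pi$, and then display back to $\BCIRC\Gamma\vdash\Pi$. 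The reduction for $\gtbox$ is dual, using the second $\mathsf{adj}$ postulate; both cases go through immediately.

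The hard part will be the principal cut on $\tgbox\xi$, since $\tgbox$ simultaneously enjoys both a left and a right adjoint, so the structural connective $\WCIRC$ appears on the same side in both the premise $\WCIRC\xi\vdash \Gamma_2$ of its left rule and the premise $\Gamma_1\vdash \WCIRC\xi$ of its right rule. My plan here is to apply the two $\mathsf{adj}$ postulates to these premises, obtaining $\BCIRC\Gamma_1\vdash \xi$ and $\xi\vdash \BCIRC\Gamma_2$, then cut on the $\mathsf{Special}$ formula $\xi$ (strictly smaller than $\tgbox\xi$) to get $\BCIRC\Gamma_1\vdash \BCIRC\Gamma_2$, and finally recover $\Gamma_1\vdash \Gamma_2$ by combining $\mathsf{adj}$ with the structural rules w-bal, b-bal and $\omega$. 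The main obstacle lies precisely in this last step: since $\BCIRC$ in antecedent position interprets as the left adjoint $\gtdia$ whereas $\BCIRC$ in succedent position interprets as the right adjoint $\gtbox$, the two displayable forms $\Gamma_1\vdash \WCIRC\BCIRC\Gamma_2$ and $\WCIRC\BCIRC\Gamma_1\vdash \Gamma_2$ correspond respectively to $\Gamma_1\leq \Gamma_2^{\star}$ and $\Gamma_1^{\ast}\leq \Gamma_2$, and neither is literally $\Gamma_1\vdash \Gamma_2$. Closing this gap is precisely where the specific structure of measurable Kleene algebras comes into play: the doubled $\mathsf{adj}$ postulates encode the identities $\gamma\circ e=\iota\circ e=\mathrm{id}$ that make $e$ faithful on both sides, and the infinitary $\omega$-rule supplies the additional structural strength required to descend from $\WCIRC\BCIRC\Gamma_1$ back to $\Gamma_1$ in antecedent position.
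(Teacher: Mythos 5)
Your routine cases, and your reductions for $\gtdia$ and $\gtbox$ (display the subformula with the appropriate $\mathsf{adj}$ postulate, cut on it, display back), coincide with the paper's. The problem is the case you yourself single out as the crucial one, the principal cut on $\tgbox\xi$: your reduction does not go through as described. After cutting on $\xi$ you are left with $\BCIRC\Gamma_1 \vdash \BCIRC\Gamma_2$, and the two available displays give $\Gamma_1 \vdash \WCIRC\BCIRC\Gamma_2$ or $\WCIRC\BCIRC\Gamma_1 \vdash \Gamma_2$; you then claim that the $\omega$-rule (with w-bal and b-bal) lets you ``descend from $\WCIRC\BCIRC\Gamma_1$ back to $\Gamma_1$ in antecedent position''. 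But $\omega$ runs in exactly the opposite direction: from the infinitely many premises $\Gamma^{(n)}\vdash \Delta$ it \emph{concludes} a sequent with $\WCIRC\BCIRC\Gamma$ in the antecedent; it never licenses replacing $\WCIRC\BCIRC\Gamma_1$ by $\Gamma_1$. What your step would require is the structural counterpart of $\alpha\leq\alpha^\ast$ (i.e.\ passing from $\WCIRC\BCIRC\Gamma_1\vdash\Gamma_2$ to $\Gamma_1\vdash\Gamma_2$), or dually of $\alpha^\star\leq\alpha$ on the succedent side, and no such structural rule exists in $\mathrm{D.MKL}$; w-bal and b-bal only prefix both sides uniformly and do not help. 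So the last step of your key case is a genuine gap, not a detail to be filled in.

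The paper's own treatment of this case is in fact the easy, standard one and uses none of that machinery. In the principal configuration the occurrence of $\tgbox\xi$ introduced on the left sits in a sequent of the shape $\tgbox\xi\vdash\WCIRC\Xi$, obtained from a premise $\xi\vdash\Xi$; the reduction then displays the right-introduction premise $\Gamma\vdash\WCIRC\xi$ to $\BCIRC\Gamma\vdash\xi$ by $\mathsf{adj}$, cuts on the strictly smaller special-type formula $\xi$ to get $\BCIRC\Gamma\vdash\Xi$, and displays back to the original conclusion $\Gamma\vdash\WCIRC\Xi$. In other words, the asymmetry you worry about ($\BCIRC$ reading as $\gtdia$ in antecedent and $\gtbox$ in succedent position) never arises, because one never needs to cut two differently displayed versions of $\xi$ against each other; only one application of $\mathsf{adj}$ on one side is required. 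You should redo the $\tgbox\xi$ case in this form rather than trying to repair the mismatched sequent $\BCIRC\Gamma_1\vdash\BCIRC\Gamma_2$.
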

\begin{proof}
By induction on the shape of the cut formula.
\paragraph*{Atomic propositions:}

\begin{center}
\begin{tabular}{ccc}
\bottomAlignProof
\AX$a \fCenter a$
\AX$a \fCenter a$
\BI$a \fCenter a$
\DisplayProof

 & $\rightsquigarrow$ &

\bottomAlignProof
\AX$a \fCenter a$
\DisplayProof
 \\
\end{tabular}
\end{center}

\paragraph*{Constants:}

\begin{center}
\begin{tabular}{ccc}
\bottomAlignProof
\AX$\gONE \fCenter \gone$
\AXC{\ \ \ $\vdots$ \raisebox{1mm}{$\pi_1$}}
\noLine
\UI$\gONE \fCenter \Delta$
\UI$\gone \fCenter \Delta$
\BI$\gONE \fCenter \Delta$
\DisplayProof

 & $\rightsquigarrow$ &

\bottomAlignProof
\AXC{\ \ \ $\vdots$ \raisebox{1mm}{$\pi_1$}}
\noLine
\UI$\gONE \fCenter \Delta$
\DisplayProof
 \\
\end{tabular}
\end{center}

\noindent The cases for $\gbot$, $\tbot$ are standard and similar to the one above.

\paragraph*{Unary connectives:}
As to $\gtdia\alpha$,
\begin{center}
\begin{tabular}{ccc}
\!\!\!\!\!
\bottomAlignProof
\AXC{\ \ \ $\vdots$ \raisebox{1mm}{$\pi_1$}}
\noLine
\UI$\Gamma \fCenter \alpha$
\UI$\BCIRC \Gamma \fCenter \gtdia\alpha$

\AXC{\ \ \ $\vdots$ \raisebox{1mm}{$\pi_2$}}
\noLine
\UI$\BCIRC \alpha \fCenter \Xi$
\UI$\gtdia\alpha \fCenter \Xi$
\BI$\BCIRC \Gamma \fCenter \Xi$
\DisplayProof

 & $\rightsquigarrow$ &

\!\!\!\!\!\!\!\!\!\!\!\!\!\!\!\!\!\!\!\!
\bottomAlignProof
\AXC{\ \ \ $\vdots$ \raisebox{1mm}{$\pi_1$}}
\noLine
\UI$\Gamma \fCenter \alpha$
\AXC{\ \ \ $\vdots$ \raisebox{1mm}{$\pi_2$}}
\noLine
\UI$\BCIRC \alpha \fCenter \Xi$
\UI$\alpha \fCenter \WCIRC\Xi$
\BI$ \Gamma \fCenter \WCIRC\Xi$
\UI$\BCIRC \Gamma \fCenter \Xi$
\DisplayProof
 \\
\end{tabular}
\end{center}

As to $\tgbox\alpha$,

\begin{center}
\begin{tabular}{ccc}
\!\!\!\!\!
\bottomAlignProof
\AXC{\ \ \ $\vdots$ \raisebox{1mm}{$\pi_1$}}
\noLine
\UI$\Gamma \fCenter \WCIRC\xi$
\UI$\Gamma \fCenter \tgbox\xi$

\AXC{\ \ \ $\vdots$ \raisebox{1mm}{$\pi_2$}}
\noLine
\UI$\xi \fCenter \Xi$
\UI$\tgbox\xi \fCenter \WCIRC\Xi$
\BI$\Gamma \fCenter \WCIRC\Xi$
\DisplayProof

 & $\rightsquigarrow$ &

\!\!\!\!\!\!\!\!\!\!\!\!\!\!\!\!\!\!
\bottomAlignProof
\AXC{\ \ \ $\vdots$ \raisebox{1mm}{$\pi_1$}}
\noLine
\UI$\Gamma \fCenter \WCIRC\xi$
\UI$\BCIRC \Gamma \fCenter \xi$

\AXC{\ \ \ $\vdots$ \raisebox{1mm}{$\pi_2$}}
\noLine
\UI$\xi \fCenter \Xi$
\BI$\BCIRC \Gamma \fCenter \Xi$
\UI$\Gamma \fCenter \WCIRC\Xi$
\DisplayProof
 \\
\end{tabular}
\end{center}

\paragraph*{Binary connectives:}
As to $\alpha_1 \gor \alpha_2$,
\begin{center}
\begin{tabular}{ccc}
\!\!\!\!\!
\bottomAlignProof
\AXC{\ \ \ $\vdots$ \raisebox{1mm}{$\pi_1$}}
\noLine
\UI$\Gamma \fCenter \alpha_1$
\UI$ \Gamma \fCenter \alpha_1 \gor \alpha_2$
\AXC{\ \ \ $\vdots$ \raisebox{1mm}{$\pi_2$}}
\noLine
\UI$\alpha_1 \fCenter \Delta$
\AXC{\ \ \ $\vdots$ \raisebox{1mm}{$\pi_3$}}
\noLine
\UI$\alpha_2 \fCenter \Delta$
\BI$\alpha_1 \gor \alpha_2 \fCenter \Delta$
\BI$\Gamma \fCenter \Delta$
\DisplayProof

 & $\rightsquigarrow$ &

\!\!\!\!\!\!\!\!\!\!\!\!\!\!\!\!\!\!\!\!
\bottomAlignProof
\AXC{\ \ \ $\vdots$ \raisebox{1mm}{$\pi_1$}}
\noLine
\UI$\Gamma \fCenter \alpha_1$
\AXC{\ \ \ $\vdots$ \raisebox{1mm}{$\pi_2$}}
\noLine
\UI$\alpha_1 \fCenter \Delta$
\BI$\Gamma \fCenter \Delta$
\DisplayProof
 \\
\end{tabular}
\end{center}
\end{proof}

\bibliography{dissertation-5}
\bibliographystyle{plain}

\end{document}